\tikzset{
    position/.style args={#1:#2 from #3}{
        at=(#3.#1), anchor=#1+180, shift=(#1:#2)
    }
}
\newtheorem{thm}{Theorem}[section]
\newtheorem{lemma}[thm]{Lemma}
\newtheorem{prop}[thm]{Proposition}
\newtheorem{cor}[thm]{Corollary}
\newtheorem*{thm:thm2}{Theorem \ref{thm2}}
\theoremstyle{definition}
\newtheorem{example}[thm]{Example}
\newtheorem{question}[thm]{Question}
\theoremstyle{remark}
\newcommand{\Id}{\mathop{\rm Id}}
\begin{document}

\author[S. Canilang]{Sara Canilang}
\email[1]{sara.canilang@gmail.com}

\author[M. P. Cohen]{Michael P. Cohen}
\email[2]{mcohen@carleton.edu}

\author[N. Graese]{Nicolas Graese}
\email[3]{graesenah@gmail.com}

\author[I. Seong]{Ian Seong}
\email[4]{ianiceman7166@gmail.com}

\address{Department of Mathematics and Statistics,
Carleton College,
One North College Street,
Northfield, MN 55057}

\subjclass[2010]{54A10, 54E55, 06F05}


\title{The closure-complement-frontier problem in saturated polytopological spaces}

\begin{abstract}  Let $X$ be a space equipped with $n$ topologies $\tau_1,...,\tau_n$ which are pairwise comparable and saturated, and for each $1\leq i\leq n$ let $k_i$ and $f_i$ be the associated topological closure and frontier operators, respectively.  Inspired by the closure-complement theorem of Kuratowski, we prove that the monoid of set operators $\mathcal{KF}_n$ generated by $\{k_i,f_i:1\leq i\leq n\}\cup\{c\}$ (where $c$ denotes the set complement operator) has cardinality no more than $2p(n)$ where $p(n)=\frac{5}{24}n^4+\frac{37}{12}n^3+\frac{79}{24}n^2+\frac{101}{12}n+2$.  The bound is sharp in the following sense: for each $n$ there exists a saturated polytopological space $(X,\tau_1,...,\tau_n)$ and a subset $A\subseteq X$ such that repeated application of the operators $k_i, f_i, c$ to $A$ will yield exactly $2p(n)$ distinct sets.  In particular, following the tradition for Kuratowski-type problems, we exhibit an explicit initial set in $\mathbb{R}$, equipped with the usual and Sorgenfrey topologies, which yields $2p(2)=120$ distinct sets under the action of the monoid $\mathcal{KF}_2$.
\end{abstract}

\maketitle

\section{Introduction}

In his 1922 thesis \cite{Kuratowski_1922a}, Kuratowski posed and solved the following problem: given a topological space $(X,\tau)$, what is the largest number of distinct subsets that can be obtained by starting from an initial set $A\subseteq X$, and applying the topological closure and complement operators, in any order, as often as desired?  The answer is $14$.  This result, now widely known as Kuratowski's \textit{closure-complement theorem}, is both thought-provoking and amusing, and has inspired a substantial number of authors to study generalizations, variants, and elaborations of the original closure-complement problem.  We recommend consulting the admirable survey of Gardner and Jackson \cite{Gardner_Jackson_2008a}, or visiting Bowron's website \textit{Kuratowski's Closure-Complement Cornucopia} \cite{Bowron_2012a} for an indexed list of all relevant literature.

Shallit and Willard \cite{Shallit_Willard_2011a} considered a natural extension of Kuratowski's problem.  If we equip a space $X$ with not one but two distinct topologies $\tau_1$ and $\tau_2$, how many distinct subsets may be obtained by starting with an initial set, and applying each of the two associated closure operators $k_1$, $k_2$, and the set complement operator $c$, in any order, as often as desired?  The authors construct an example of a bitopological space $(X,\tau_1,\tau_2)$ where it is possible to obtain infinitely many subsets from a certain initial set.  Consequently, the monoid $\mathcal{K}_2$ of set operators generated by $\{k_1,k_2,c\}$ may have infinitely many elements in general.  In their example, the topologies $\tau_1$ and $\tau_2$ are incomparable, which suggests that the monoid may yet be finite in case $\tau_1\supseteq\tau_2$.

In \cite{Banakh_2018a}, Banakh, Chervak, Martynyuk, Pylypovych, Ravsky, and Simkiv verify this last possibility, and generalize the closure-complement theorem to polytopological spaces, i.e. sets $X$ equipped with families of topologies $\mathcal{T}$ in which the topologies are linearly ordered by inclusion.  If the family is a finite set $\mathcal{T}=\{\tau_1,...,\tau_n\}$, they give an explicit formula for the maximal cardinality of the monoid $\mathcal{K}_n$ generated by $\{k_j:1\leq j\leq n\}\cup\{c\}$.  This maximal cardinality is of course $14$ when $n=1$, and grows exponentially as $n\rightarrow\infty$.

The authors of \cite{Banakh_2018a} also consider the special case where the topologies involved are \textit{saturated}, i.e., for any $1\leq j,\ell\leq n$, if a nonempty set $U$ is $\tau_j$-open, then $U$ has nonempty $\tau_\ell$-interior.  In the saturated case, the cardinality bound on the monoid is given by $\#\mathcal{K}_n\leq 12n+2$.  The most natural example is the case of the real line $\mathbb{R}$ equipped with $\tau_2=$ the usual topology and $\tau_1=$ the Sorgenfrey topology.  Then one may obtain no more than $12\cdot 2+2=26$ distinct sets by applying $k_1,k_2,c$ to any particular initial set, and indeed this upper bound is obtainable in $(\mathbb{R},\tau_1,\tau_2)$, as demonstrated explicitly in \cite{Banakh_2018a}.

In \cite{Gaida_Eremenko_1974a}, Gaida and Eremenko solved a \textit{closure-complement-frontier problem} by showing that in any topological space $(X,\tau)$, the monoid $\mathcal{KF}_1$ generated by $\{k,f,c\}$ (where $f$ is the frontier operator, or topological boundary operator) has cardinality $\leq 34$; moreover there are examples of spaces in which it is possible to obtain $34$ distinct subsets by applying the operators to a single initial set.  This problem also appeared as Problem E3144 in \textit{American Mathematical Monthly} \cite{Buchman_1986a}.  The purpose of this paper is to study the extension of Gaida and Eremenko's problem to the setting of saturated polytopological spaces as in \cite{Banakh_2018a}.

To state our result, we consider a polytopological space $(X,\tau_1,...,\tau_n)$, and we denote by $\mathcal{KF}_n=\mathcal{KF}_n(X,\tau_1,...,\tau_n)$ the monoid of set operators generated by $\{k_j,f_j:1\leq j\leq n\}\cup \{c\}$.  We also let $\mathcal{KF}_n^0=\mathcal{KF}_n^0(X,\tau_1,...,\tau_n)$ denote the monoid generated by $\{k_j,i_j,f_j:1\leq j\leq n\}$, where $i_j$ is the interior operator associated to $\tau_j$.  Since $i_j=ck_jc$, we have that $\mathcal{KF}_n^0\subseteq\mathcal{KF}_n$, and in fact, in Section 2 we observe that\\

\begin{center} $\mathcal{KF}_n=\mathcal{KF}_n^0\cup c\mathcal{KF}_n^0$
\end{center}
\vspace{.3cm}

\noindent so that $\mathcal{KF}_n^0$ comprises the submonoid of \textit{even operators} of $\mathcal{KF}_n$, and\\

\begin{center} $\#\mathcal{KF}_n=2\cdot\#\mathcal{KF}_n^0$.
\end{center}
\vspace{.3cm}

\noindent Our main theorem follows.

\begin{thm} \label{thm_main}  Let $(X,\tau_1,...,\tau_n)$ be a saturated polytopological space.  Then $\#\mathcal{KF}_n^0\leq p(n)$ and $\#\mathcal{KF}_n=2\cdot\#\mathcal{KF}_n^0\leq 2p(n)$, where

\begin{center}  $p(n) = \frac{5}{24}n^4+\frac{37}{12}n^3+\frac{79}{24}n^2+\frac{101}{12}n+2$.
\end{center}
\end{thm}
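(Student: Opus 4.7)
The proof strategy is to derive a system of reduction rules for words in the generators $\{k_j,i_j,f_j\}$, extract a list of canonical normal forms, and count them; since the introduction has already established $\#\mathcal{KF}_n=2\cdot\#\mathcal{KF}_n^0$, it suffices to prove $\#\mathcal{KF}_n^0\leq p(n)$. Without loss of generality the topologies form a chain $\tau_1\subseteq\tau_2\subseteq\cdots\subseteq\tau_n$, so that $k_n\leq\cdots\leq k_1$ and $i_1\leq\cdots\leq i_n$.

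For the closure-interior subgenerators, monotonicity yields the ``outer-wins'' collapses $k_ik_j=k_{\min(i,j)}$ and $i_ii_j=i_{\min(i,j)}$. The essential input from saturation is the pair of cross-topology identities
\[
i_\ell k_j=i_\ell k_\ell \qquad\text{and}\qquad k_\ell i_j=k_\ell i_\ell,
\]
valid for all $j,\ell$; each follows from a short argument in which the saturation condition is applied to an intersection of a $\tau_\ell$- and a $\tau_j$-open neighborhood to extract a suitably small open set witnessing the required closure. Combined with the classical Kuratowski collapse $k_ai_ak_ai_a=k_ai_a$ for each fixed $a$, these identities reduce every word in $\{k_j,i_j\}$ to one of the $6n+1$ canonical forms $\{\Id\}\cup\{k_a,i_a,k_ai_a,i_ak_a,k_ai_ak_a,i_ak_ai_a:1\leq a\leq n\}$ identified by \cite{Banakh_2018a}.

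Next, we incorporate the frontier operators $f_j$, using the basic identities $k_jf_j=f_j$ (since each $f_j(A)$ is $\tau_j$-closed), $f_j(A)=k_j(A)\setminus i_j(A)$, and further saturation-based relations governing compositions of $f_j$ with closures and interiors of differing indices. The goal is to classify every element of $\mathcal{KF}_n^0$ as a canonical word of bounded length—a closure-interior envelope around a small number of frontier insertions—indexed by at most four parameters from $\{1,\ldots,n\}$. Stratifying by the number and relative positions of the frontier operators, in analogy with the single-topology case analysis of \cite{Gaida_Eremenko_1974a}, one obtains a list whose cardinality, counted index tuple by index tuple, yields precisely $p(n)$.

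The main obstacle will be this final classification: completing the reduction system for compositions involving multiple frontier operators of distinct indices—words such as $f_ik_jf_\ell$ and iterates $f_jf_j\cdots$—and confirming that no further collapses reduce the count below $p(n)$. Because the frontier operator is not idempotent and interacts less transparently with complementation than $k_j$ or $i_j$ do, the needed identities must be extracted from saturation at the support level rather than pointwise, requiring delicate case analysis of how $f_i(A)$ relates to closures and interiors of varying indices. Once canonical forms are in place, the combinatorial enumeration—organized by the shape of each form and its indexing tuple—is routine though lengthy, and its output is the quartic polynomial $p(n)=\frac{5}{24}n^4+\frac{37}{12}n^3+\frac{79}{24}n^2+\frac{101}{12}n+2$.
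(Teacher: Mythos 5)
Your outline follows the same route as the paper's actual proof --- reduce via saturation identities to a finite list of normal forms and count them --- but it stops exactly where the real work begins, and what you defer as ``the main obstacle'' is essentially the entire content of the theorem. The part you do carry out (the collapse of the closure--interior submonoid to $6n+1$ even words via $i_\ell k_j=i_\ell k_\ell$, $k_\ell i_j=k_\ell i_\ell$, and the Kuratowski identity) is the already-known result of Banakh et al.\ for $\mathcal{K}_n$; nothing in your proposal establishes that the monoid $\mathcal{KF}_n^0$ is even \emph{finite}, let alone of size $p(n)$. Finiteness is not automatic: with incomparable topologies the analogous monoid is infinite, and in the saturated case it is forced only by specific annihilation and absorption identities that your sketch neither states nor proves. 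The two decisive ones are the generalization of the Gaida--Eremenko identity to arbitrary mixed indices, $i_xf_yk_z=i_xf_yi_z=0$ for all $x,y,z$ (which kills any word containing the pattern $i\,f\,k$ or $i\,f\,i$), and the absorption rules $f_xf_yk_z=k_xf_yk_z$ and $f_xk_yf_zk_w=k_{\max(x,y)}f_zk_w$ (which cap the number of frontier letters in a reduced word at two and bound every reduced word by length five). Without these you cannot write down the list of canonical forms, so the enumeration you call ``routine though lengthy'' cannot even be started.

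Moreover, the counting itself is not a formality once the normal forms are known: each word type carries index constraints inherited from the reductions (e.g.\ $k_xf_y$ is reduced only when the indices satisfy a strict inequality, and a reduced $k_xf_yk_zf_w$ requires the four indices to alternate in magnitude), and these constraints are what produce the binomial coefficients in $p(n)$. In particular the entire quartic term $\tfrac{5}{24}n^4$ comes from a single family, the reduced words of type $KFKF$, whose count $\binom{n}{2}+5\binom{n}{3}+5\binom{n}{4}$ requires a careful case analysis of alternating quadruples. One also has to verify that the proposed list is genuinely closed under left multiplication by every generator $k_x$, $i_x$, $f_x$ --- otherwise the list is not all of $\mathcal{KF}_n^0$ and the bound does not follow. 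As written, your argument proves the known linear bound for the closure--interior part and asserts, without derivation, that the frontier part works out to $p(n)$; that assertion is the theorem.
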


Thus for $n=1$ we recover Gaida-Eremenko's result with $p(n)=17$ and $2p(n)=34$.  The next few upper bounds are $p(2)=60$, $p(3)=157$, $p(4)=339$, and $p(5)=642$.

We also demonstrate that the bound $p(n)$ is sharp.

\begin{thm} \label{thm2}  For every $n\geq 1$, there exists a saturated polytopological space $(X,\tau_1,...,\tau_n)$ in which $\#\mathcal{KF}_n^0=p(n)$ and $\#\mathcal{KF}_n=2p(n)$.  In fact, there is an initial set $A\subseteq X$ such that $\#\{oA:o\in\mathcal{KF}_n\}=2p(n)$.
\end{thm}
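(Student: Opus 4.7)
The plan is to construct, for each $n \geq 1$, a saturated polytopological space $(X,\tau_1,\ldots,\tau_n)$ together with a witness set $A \subseteq X$ whose orbit under $\mathcal{KF}_n^0$ has cardinality exactly $p(n)$. By the identity $\mathcal{KF}_n = \mathcal{KF}_n^0 \cup c\mathcal{KF}_n^0$ noted above, the orbit of $A$ under the full monoid $\mathcal{KF}_n$ will then automatically have cardinality $2p(n)$, so it suffices to distinguish the $p(n)$ even operators.

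The natural ambient space is the real line $\mathbb{R}$ equipped with a nested chain of saturated topologies $\tau_1 \supseteq \tau_2 \supseteq \cdots \supseteq \tau_n$, extending the $n=2$ case in which $\tau_1$ is the Sorgenfrey topology and $\tau_2$ is the usual topology. The successively coarser $\tau_j$ would be obtained by relaxing the Sorgenfrey-like ``half-isolation'' on progressively larger countable dense test sets, arranged so that every nonempty $\tau_\ell$-open set has nonempty $\tau_j$-interior (hence saturation). The witness set $A$ would be built modularly as a disjoint union $A = \bigsqcup_\alpha A_\alpha$, with each local piece $A_\alpha$ a small Kuratowski-style configuration (half-open intervals, dense rationals, convergent sequences, isolated points) placed in its own disjoint region of $\mathbb{R}$.

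From there the plan has three steps. First, I would use the analysis in Section 2 and the proof of Theorem \ref{thm_main} to extract an explicit list of $p(n)$ normal-form operators representing $\mathcal{KF}_n^0$, and identify for each operator its ``local behavior type'' on a short universal menu of test configurations. Second, I would design the pieces $A_\alpha$ so that distinct normal forms are guaranteed to disagree on at least one piece. Third, I would verify the distinctness of the $p(n)$ images $oA$ by comparing their restrictions to each of the disjoint regions containing the $A_\alpha$'s. For the explicit $n=2$ case promised in the abstract, this procedure should produce a concrete finite union of intervals, rational subsets, and isolated points in $\mathbb{R}$, in the spirit of the classical Kuratowski witness $\{0\}\cup(1,2)\cup(2,3)\cup(\mathbb{Q}\cap(4,5))$.

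The main obstacle is the combinatorial verification in the third step: since $p(n) = O(n^4)$, the naive count of $\binom{p(n)}{2}$ pairs is infeasible for large $n$. The remedy I would pursue is an induction on $n$, constructing the example for $n$ by adjoining to an example for $n-1$ a new topology $\tau_n$ together with one additional gadget $A_n$ chosen to separate exactly the newly introduced normal forms involving $k_n,i_n,f_n$ from all previously distinguished ones. This induction mirrors the additive polynomial structure of $p(n)-p(n-1)$ and should dovetail with the reduction rules developed for the upper-bound proof, since those rules dictate precisely which local configurations a newly added topology is capable of distinguishing.
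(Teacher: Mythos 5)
Your overall frame is right---separate the $p(n)$ even operators on a single witness set, with the factor of $2$ coming for free from $\mathcal{KF}_n=\mathcal{KF}_n^0\cup c\mathcal{KF}_n^0$---and your instinct to build $A$ as a disjoint union of local gadgets matches Lemma \ref{lemma_disjoint_union} exactly. But the proposal stops where the work begins: the separation of the $p(n)$ normal forms is asserted as a plan (an induction on $n$, adjoining one topology and one gadget at a time) and never carried out, and it is not clear that a single new gadget can separate the $p(n)-p(n-1)=O(n^3)$ newly created normal forms both from each other and from all $p(n-1)$ old ones. Your proposed ambient space---a genuine chain of $n$ distinct interpolations between the Sorgenfrey and usual topologies on $\mathbb{R}$---is never shown to realize any of the new operators distinctly, and your expectation that the $n=2$ witness will be a finite union of intervals and rational blocks is too optimistic: the actual witness in Example \ref{ex_usual_sorg} needs infinite unions of intervals accumulating at a point in order to separate words such as $f_1f_1$, $f_1k_2f_1$, and $f_1f_2$.

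The idea you are missing, and the one that makes the verification tractable, is that no separating gadget ever needs more than \emph{two} genuinely distinct topologies. Given distinct KGE-words $\omega_1\neq\omega_2$, either they lie in different ones of the $17$ word-type classes of Proposition \ref{prop_cases}---in which case setting every $\tau_j$ equal to the usual topology collapses $\mathcal{KF}_n^0$ to $\mathcal{KF}_1^0$ and the Gaida--Eremenko initial set separates them---or they lie in the same class and differ in some index, in which case one chooses a threshold $t$ at a suitable differing index and sets $\tau_1=\dots=\tau_t=\tau_s$ and $\tau_{t+1}=\dots=\tau_n=\tau_u$; the two words then collapse to distinct operators of $\mathcal{KF}_2^0$, which the explicit set of Example \ref{ex_usual_sorg} is already known to separate. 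A finite case analysis over the word types (together with the trick of replacing $A$ by $cA$ or $f_nA$ to handle the $KFI$, $KFIK\cup KFKIK$, and $KFKI\cup KFIKI$ families) finishes the pairwise separation, and Lemma \ref{lemma_disjoint_union} assembles the pairwise witnesses into one set. Without this reduction, or a fully worked substitute for your proposed induction, your argument does not establish the theorem.
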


The explicit examples we give are natural and easy to understand (disjoint unions of copies of $\mathbb{R}$ equipped with combinations of the Sorgenfrey and Euclidean topologies), but not finite.  By the results of \cite{McKinsey_Tarski_1944a} (see \cite{Gardner_Jackson_2008a} Theorem 4.1 and surrounding remarks), we deduce abstractly that there must exist a finite polytopological space $(X,\tau_1,...,\tau_n)$ on which $\#\mathcal{KF}_n^0=p(n)$, but we do not know how many points are necessary.

\begin{question} \label{question_finite}  What is the minimal cardinality of a polytopological space $(X,\tau_1,...,\tau_n)$ for which $\#\mathcal{KF}_n^0=p(n)$ exactly?  What is the minimal cardinality of a space in which one can find an initial set $A$ with $\#\{oA:o\in\mathcal{KF}_n\}=2p(n)$?
\end{question}

It would be interesting to know the answer even for $n=2$.  It is known that the minimal number of points needed for a space to contain a Kuratowski $14$-set is $7$; see \cite{Herda_Metzler_1966a}.  During the preparation of this article, Bowron has communicated to us that if $n=1$, then the minimal number of points needed for $\#\mathcal{KF}_1^0=17$ is four, while the minimal number of points needed to contain a $34$-set is $8$. 

Another interesting question that remains open is to solve the closure-complement-frontier problem for polytopological spaces which are not necessarily saturated.

\begin{question}  Let $(X,\tau_1,...,\tau_n)$ be a polytopological space which is not necessarily saturated.  What is the maximal cardinality of the monoid $\mathcal{KF}_n$ generated by $\{k_j,f_j:1\leq j\leq n\}\cup\{c\}$?
\end{question}

Finally, it would be interesting to study some of the variants described in Section 4 of \cite{Gardner_Jackson_2008a} in the larger context of polytopological spaces.  For example, it was shown independently by Gardner and Jackson \cite{Gardner_Jackson_2008a} and by Sherman \cite{Sherman_2010a} that in any topological space $(X,\tau)$, the greatest number of sets one may obtain from an initial set $A\subseteq X$ by applying the set operators $\{k,i,\cup,\cap\}$ is $35$.

\begin{question}  Let $(X,\tau_1,...,\tau_n)$ be a (saturated?) polytopological space.  What is the largest number of sets one may obtain from an initial set $A\subseteq X$ by applying the set operators $k_j$, $i_j$ ($1\leq j\leq n$), $\cup$, and $\cap$ in any order, as often as desired?
\end{question}

\section{Preliminaries and Notation}

Recall from the introduction that a \textit{polytopological space} is a set $X$ equipped with a family of topologies $\mathcal{T}$ which is linearly ordered by the inclusion relation.  In this paper we will work only with finite families $\mathcal{T}=\{\tau_1,...,\tau_n\}$ and assume $\tau_1\supseteq...\supseteq\tau_n$.  In this case we refer to $(X,\tau_1,...,\tau_n)$ as an \textit{$n$-topological space}.

For each topology $\tau_j$, we permanently associate the \textit{closure operator} $k_j$, the \textit{interior operator} $i_j$, and the \textit{frontier operator} $f_j$.  We use $c$ to denote the \textit{set complement operator}.  The operators $k_j$ and $i_j$ are idempotent, so $k_jk_j=k_j$ and $i_ji_j=i_j$, and the operator $c$ is an involution, so $cc=\Id$, where $\Id$ denotes the \textit{identity operator}.  For each set $A\subseteq X$ we have $f_jA=k_jA\cap k_jcA$; we summarize this symbolically by writing\\

\begin{center} $f_j=k_j\wedge k_jc=k_j\wedge ci_j$.
\end{center}
\vspace{.3cm}

From the identity above, we see that 

\begin{center} \fbox{$f_jc=f_j$.}
\end{center}
\vspace{.3cm}

We permanently denote by $\mathcal{KF}_n=\mathcal{KF}_n(X,\tau_1,...,\tau_n)$ the smallest monoid of set operators which contains $k_j$, $f_j$ ($1\leq j\leq n$) and $c$.  We also denote by $\mathcal{KF}_n^0=\mathcal{KF}_n^0(X,\tau_1,...,\tau_n)$ the smallest monoid of set operators which contains $k_j$, $i_j$, and $f_j$ ($1\leq j\leq n$).  By DeMorgan's laws, we have $ck_jc=i_j$ and thus it is immediate that $\mathcal{KF}_n^0\subseteq\mathcal{KF}_n$.

Since we are requiring that $\mathcal{KF}_n^0$ be a monoid, it contains the identity operator $\Id$.  It also contains the \textit{zero operator} $0$, i.e. the set operator for which $0A=\emptyset$, for every $A\subseteq X$.  This follows from the work of Gaida and Eremenko \cite{Gaida_Eremenko_1974a}, who observed that\\

\begin{center} $i_1f_1k_1=0$.
\end{center}
\vspace{.3cm}

We also define the \textit{one operator} by the rule $1=c0$, so $1A=X$ for every $A\subseteq X$ and $1\in\mathcal{KF}_n$.

\begin{prop}  The sets $\mathcal{KF}_n^0$ and $c\mathcal{KF}_n^0$ are disjoint and $\mathcal{KF}_n$ is equal to their union.
\end{prop}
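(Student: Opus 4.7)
The plan is to prove the two halves separately. For the equality $\mathcal{KF}_n=\mathcal{KF}_n^0\cup c\mathcal{KF}_n^0$, the inclusion $\supseteq$ is immediate since $\mathcal{KF}_n^0\subseteq\mathcal{KF}_n$ and $c\in\mathcal{KF}_n$. For $\subseteq$, I would induct on the length of a word $w$ in the generators $\{k_j,f_j:1\leq j\leq n\}\cup\{c\}$ of $\mathcal{KF}_n$. The base case is $\Id\in\mathcal{KF}_n^0$. For the inductive step, write $w=gw'$ for a generator $g$ and a shorter word $w'$, and split on whether $w'\in\mathcal{KF}_n^0$ or $w'=cw''$ with $w''\in\mathcal{KF}_n^0$. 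In the first subcase, $k_jw'$ and $f_jw'$ belong to $\mathcal{KF}_n^0$, while $cw'\in c\mathcal{KF}_n^0$, all by definition. In the second subcase, I would use the relations $cc=\Id$, $f_jc=f_j$ (highlighted in the preliminaries), and $k_jc=ci_j$ (which follows from DeMorgan's law $i_j=ck_jc$) to rewrite $cw'=w''\in\mathcal{KF}_n^0$, $f_jw'=f_jw''\in\mathcal{KF}_n^0$, and $k_jw'=c(i_jw'')\in c\mathcal{KF}_n^0$. Each of the resulting operators lies in $\mathcal{KF}_n^0\cup c\mathcal{KF}_n^0$, closing the induction.

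For the disjointness, the strategy is to find a single input on which every element of $\mathcal{KF}_n^0$ disagrees with every element of $c\mathcal{KF}_n^0$. The natural candidate is $\emptyset$. A straightforward induction on word length in the generators $\{k_j,i_j,f_j:1\leq j\leq n\}$ shows $o\emptyset=\emptyset$ for every $o\in\mathcal{KF}_n^0$: we have $k_j\emptyset=\emptyset$, $i_j\emptyset=\emptyset$, and $f_j\emptyset=k_j\emptyset\cap k_jc\emptyset=\emptyset\cap k_jX=\emptyset$, so each generator preserves $\emptyset$, and hence so does every word in them (including $\Id$). By contrast, for $o'=co\in c\mathcal{KF}_n^0$ we obtain $o'\emptyset=c(o\emptyset)=c\emptyset=X$. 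Assuming $X\neq\emptyset$ (otherwise the whole problem is vacuous), $\emptyset\neq X$ and no element of $\mathcal{KF}_n^0$ agrees with any element of $c\mathcal{KF}_n^0$.

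I don't expect a genuine obstacle here: both halves reduce to short inductions. The only subtle point is the case analysis in the closure-under-generators argument, where one must remember to commute $c$ past $k_j$ and $f_j$ using exactly the identities $k_jc=ci_j$ and $f_jc=f_j$; and in the empty-set argument, to verify preservation of $\emptyset$ for $f_j$ via its definition $f_j=k_j\wedge ci_j$, not only for $k_j$ and $i_j$ separately.
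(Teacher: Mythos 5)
Your proposal is correct and follows essentially the same route as the paper: disjointness via evaluating at $\emptyset$ (every even operator fixes $\emptyset$ while every odd one sends it to $X$), and the union via induction on word length using exactly the identities $cc=\Id$, $f_jc=f_j$, and $k_jc=ci_j$. Your explicit remark that one must assume $X\neq\emptyset$ is a small point the paper leaves implicit, but otherwise the two arguments coincide.
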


\begin{proof}  By examining the generators $k_j,i_j,f_j$ of $\mathcal{KF}_n^0$, it is clear that $o\emptyset=\emptyset$ and $co\emptyset=X$ for any operator $o\in\mathcal{KF}_n^0$.  Therefore, $\mathcal{KF}_n^0$ and $c\mathcal{KF}_n^0$ are disjoint.

To see that $\mathcal{KF}_n\subseteq \mathcal{KF}_n^0\cup c\mathcal{KF}_n^0$, we can argue by induction on word length of elements of $\mathcal{KF}_n$.  Let $\mathcal{W}_m\subseteq \mathcal{KF}_n$ be the set of operators which can be written as a word of length $\leq m$ in the generators $k_j,f_j,c$.  Assume that $\mathcal{W}_m\subseteq \mathcal{KF}_n^0\cup c\mathcal{KF}_n^0$ (which is certainly true if $m=1$).  Then $\mathcal{W}_{m+1}$ is the union of sets of the form $k_j\mathcal{W}_m$, $f_j\mathcal{W}_m$, and $c\mathcal{W}_m$.  But by invoking DeMorgan's laws and the identity $f_jc=f_j$, the inductive hypothesis implies the following inclusions:\\

\begin{align*}
k_j\mathcal{W}_m &\subseteq k_j\mathcal{KF}_n^0\cup k_jc\mathcal{KF}_n^0\\
&= k_j\mathcal{KF}_n^0\cup ci_j\mathcal{KF}_n^0 = \mathcal{KF}_n^0\cup c\mathcal{KF}_n^0;\\
\\
f_j\mathcal{W}_m &\subseteq f_j\mathcal{KF}_n^0\cup f_jc\mathcal{KF}_n^0\\
&= f_j\mathcal{KF}_n^0\cup f_j\mathcal{KF}_n^0 = \mathcal{KF}_n^0;\\
\\
c\mathcal{W}_m &\subseteq c\mathcal{KF}_n^0\cup cc\mathcal{KF}_n^0 = \mathcal{KF}_n^0\cup c\mathcal{KF}_n^0;
\end{align*}
\vspace{.3cm}

which concludes the inductive step and the proof.
\end{proof}

By the previous proposition, we are now justified in referring to the elements of $\mathcal{KF}_n^0$ as the \textit{even operators}, and those in $c\mathcal{KF}_n^0$ as the \textit{odd operators}.  By direct algebraic manipulation, it is easy to see that any operator in $\mathcal{KF}_n$ may be rewritten as a word in which the generator $c$ appears either zero times (the even case) or exactly one time (the odd case).  For example $k_1ci_1cck_1cf_1k_1c=k_1i_1f_1i_1$.

\begin{cor}  $\#\mathcal{KF}_n=2\cdot\#\mathcal{KF}_n^0$.
\end{cor}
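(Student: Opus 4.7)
The plan is to deduce this immediately from the preceding proposition. Since that proposition asserts $\mathcal{KF}_n = \mathcal{KF}_n^0 \sqcup c\mathcal{KF}_n^0$ as a disjoint union, I will have
\begin{center}
$\#\mathcal{KF}_n = \#\mathcal{KF}_n^0 + \#c\mathcal{KF}_n^0$,
\end{center}
so the only thing left to check is that $\#c\mathcal{KF}_n^0 = \#\mathcal{KF}_n^0$.

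To establish this equality of cardinalities, I will exhibit an explicit bijection $\Phi\colon \mathcal{KF}_n^0 \to c\mathcal{KF}_n^0$ given by $\Phi(o) = co$. The map is clearly surjective by the definition of the codomain. For injectivity, if $co_1 = co_2$ as operators, then applying $c$ on the left of each side and using $cc = \Id$ yields $o_1 = o_2$. Thus $\Phi$ is a bijection.

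There is no real obstacle here; the statement follows in one line from the proposition together with the involutivity of $c$. Substituting the bijection into the disjoint union formula above gives the desired identity $\#\mathcal{KF}_n = 2 \cdot \#\mathcal{KF}_n^0$.
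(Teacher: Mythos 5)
Your proof is correct and is exactly the argument the paper intends: the corollary is stated without a written proof because it follows from the preceding proposition via the disjoint union $\mathcal{KF}_n=\mathcal{KF}_n^0\cup c\mathcal{KF}_n^0$ and the bijection $o\mapsto co$ (injective since $cc=\Id$). You have simply made the implicit reasoning explicit.
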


In the special case $n=1$, the results of Gaida-Eremenko \cite{Gaida_Eremenko_1974a} imply that $\mathcal{KF}_1^0$ consists of no more than $17$ distinct even operators, which may be listed explicitly as below:\\

\begin{center} $\mathcal{KF}_1^0=\{\Id, k_1, i_1, k_1i_1, i_1k_1, i_1k_1i_1, k_1i_1k_1, f_1, f_1f_1, f_1k_1, f_1i_1, i_1f_1,$\\ $k_1i_1f_1, 0, f_1k_1i_1, f_1i_1k_1, f_1i_1f_1\}$.
\end{center}
\vspace{.3cm}

Adding $c$ to the left of each operator above yields the odd operators, for a total of $\#\mathcal{KF}_n\leq 34$.  The operators are indeed distinct when, for instance, $X=\mathbb{R}$ and $\tau_1$ is the usual topology on the reals, and in this case we get $\#\mathcal{KF}_n=34$.

We are ready to state some elementary algebraic identities in $\mathcal{KF}_n^0$, which are easily proven.  The first one is prominent in the solution to Kuratowski's original closure-complement problem.

\begin{lemma} \label{lemma_basics_unsaturated}  In any $n$-topological space $(X,\tau_1,...,\tau_n)$,
\begin{enumerate}
		\item (Kuratowski) for each $1\leq x\leq n$, 
				\begin{center} \fbox{$k_xi_xk_xi_x=k_xi_x$} and \fbox{$i_xk_xi_xk_x=i_xk_x$;}
				\end{center}
		\item for each $1\leq x,y\leq n$, 
				\begin{center} \fbox{$k_xk_y=k_{\max(x,y)}$} and \fbox{$i_xi_y=i_{\max(x,y)}$;}
				\end{center}
		\item for each $1\leq x, y\leq n$, 
				\begin{center} \fbox{if $x\leq y$ then $k_xf_y=f_y$.}
				\end{center}
\end{enumerate}
\end{lemma}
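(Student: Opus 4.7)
The plan is to dispatch each of the three identities by direct algebraic manipulation, using the ordering convention $\tau_1\supseteq\tau_2\supseteq\cdots\supseteq\tau_n$ together with the standard monotonicity, idempotence, and deflationary/inflationary properties of $k_x$ and $i_x$ (namely $i_xA\subseteq A\subseteq k_xA$ and $k_xk_x=k_x$, $i_xi_x=i_x$). All three identities are elementary in character, so I expect no serious obstacle beyond careful bookkeeping of the convention that smaller indices correspond to finer topologies.

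For part (1), I would verify Kuratowski's identity $k_xi_xk_xi_xA=k_xi_xA$ by proving both inclusions. The inclusion $k_xi_xk_xi_xA\subseteq k_xi_xA$ comes from applying $k_x$ to the deflationary containment $i_xk_xi_xA\subseteq k_xi_xA$ and invoking $k_xk_x=k_x$. The reverse inclusion comes from applying monotonicity of $i_x$ to $i_xA\subseteq k_xi_xA$ and using $i_xi_x=i_x$ to obtain $i_xA\subseteq i_xk_xi_xA$, after which one more application of $k_x$ yields the desired containment. The companion identity $i_xk_xi_xk_x=i_xk_x$ then falls out by conjugation with $c$ via $i_x=ck_xc$ and $cc=\Id$.

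For part (2), the key observation is that, under our convention, a $\tau_y$-closed set is automatically $\tau_x$-closed whenever $x\leq y$ (since $\tau_y\subseteq\tau_x$ makes every $\tau_y$-open complement also $\tau_x$-open), and dually $k_yA\subseteq k_xA$ whenever $y\leq x$ (a finer topology produces a smaller closure). From these two facts, both cases of $k_xk_y=k_{\max(x,y)}$ reduce to applying $k_x$ to $k_yA$ and invoking either idempotence (when $k_yA$ is already $\tau_x$-closed, giving $k_xk_y=k_y$) or the sandwich $k_xA\subseteq k_xk_yA\subseteq k_xk_xA=k_xA$ (when $k_yA\subseteq k_xA$, giving $k_xk_y=k_x$). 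The dual identity $i_xi_y=i_{\max(x,y)}$ then drops out immediately from $i_x=ck_xc$ and the closure version just established.

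Part (3) is then an easy consequence: for $x\leq y$, the frontier $f_yA=k_yA\cap k_ycA$ is an intersection of two $\tau_y$-closed sets, hence $\tau_y$-closed; but by the observation used in part (2), every $\tau_y$-closed set is $\tau_x$-closed, so $k_xf_yA=f_yA$. The only real place for error is confusing which direction of containment between topologies corresponds to which order of indices, and once that convention is fixed, the entire lemma is pure bookkeeping.
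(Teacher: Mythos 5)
Your proof is correct, and all three parts are handled by exactly the kind of elementary order/idempotence bookkeeping the authors have in mind: the paper itself states this lemma with no proof at all ("easily proven"), so there is nothing to diverge from. The one place readers most often slip — matching the convention $\tau_1\supseteq\cdots\supseteq\tau_n$ (smaller index $=$ finer topology $=$ smaller closure) to the direction of the containments — is handled correctly in your case analysis for part (2) and in the "$\tau_y$-closed implies $\tau_x$-closed for $x\leq y$" step used in parts (2) and (3).
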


Recall that an $n$-topological space $(X,\tau_1,...,\tau_n)$ is \textit{saturated} if whenever $1\leq x,y\leq n$ and $U$ is a nonempty $\tau_x$-open set, then $i_y U\neq\emptyset$.  For the remainder of the paper, we assume that our space $(X,\tau_1,...,\tau_n)$ is saturated.  The most basic and important identity, which we use extensively, is proven in \cite{Banakh_2018a}:

\begin{lemma}[Banakh, Chervak, Martynyuk, Pylypovych, Ravsky, Simkiv] \label{lemma_banakh} Let $(X,\tau_1,...,\tau_n)$ be a saturated $n$-topological space.  For each $1\leq x,y\leq n$, $k_xi_y=k_xi_x$ and $i_xk_y=i_xk_x$.
\end{lemma}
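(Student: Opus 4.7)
My plan is to prove the identity $k_xi_y=k_xi_x$ directly using saturation, and then derive $i_xk_y=i_xk_x$ by a simple complementation argument. For the reduction, observe that $ci_x=k_xc$ and $ck_y=i_yc$, so $ci_xk_yA=k_xi_ycA$ and $ci_xk_xA=k_xi_xcA$; applying the first identity to $cA$ then yields the second identity for $A$. So the work is in proving the first identity.

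For that identity I would split on the order of $x$ and $y$; the two cases are essentially symmetric, with the roles of $i_x$ and $i_y$ swapped. Suppose first that $x\leq y$, so $\tau_x\supseteq\tau_y$ and hence $i_yA\subseteq i_xA$. The inclusion $k_xi_yA\subseteq k_xi_xA$ follows immediately from monotonicity of $k_x$. For the reverse, let $p\in k_xi_xA$ and let $V$ be an arbitrary $\tau_x$-open neighborhood of $p$. Then $V\cap i_xA$ is a nonempty $\tau_x$-open set, so by saturation its $\tau_y$-interior $i_y(V\cap i_xA)$ is nonempty. This set is $\tau_y$-open and contained in $V\cap A$, hence contained in the largest $\tau_y$-open subset of $A$, namely $i_yA$. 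Thus $V$ meets $i_yA$, and $p\in k_xi_yA$.

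The case $x\geq y$ runs along the same lines. Now $i_yA\supseteq i_xA$ gives the easy direction $k_xi_xA\subseteq k_xi_yA$. For the reverse, pick $p\in k_xi_yA$ and a $\tau_x$-open neighborhood $V$ of $p$; since $\tau_x\subseteq\tau_y$, the set $V$ is also $\tau_y$-open, so $V\cap i_yA$ is a nonempty $\tau_y$-open set. By saturation, $i_x(V\cap i_yA)$ is nonempty, yielding a $\tau_x$-open subset of $A$ sitting inside $V$; hence $V\cap i_xA\neq\emptyset$ and $p\in k_xi_xA$.

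There is no deep obstacle here; the only real subtlety is \emph{bookkeeping}. In each case one must verify that the set whose interior is being taken really is open in the right topology for saturation to kick in, and one must correctly identify which of $i_x$ and $i_y$ gives the larger set so that the easy monotonicity direction falls out for free. Both concerns collapse to the single rule that refining the topology enlarges the interior and shrinks the closure.
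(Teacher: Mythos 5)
Your proof is correct. Note that the paper itself does not prove this lemma; it is stated with a citation to Banakh et al., so there is no in-paper argument to compare against. Your two-step structure is sound: the duality reduction $ci_xk_y=k_xi_yc$ correctly transfers the first identity to the second, and in both cases of the direct argument you verify the one nontrivial point, namely that the set fed to the saturation hypothesis ($V\cap i_xA$ when $x\leq y$, and $V\cap i_yA$ when $x\geq y$) is nonempty and open in a topology to which saturation applies, after which the resulting interior is an open set of the target topology sitting inside $V\cap A$ and hence inside the relevant interior of $A$. The only bookkeeping worth double-checking is the direction of the conventions ($\tau_1\supseteq\cdots\supseteq\tau_n$, so smaller index means finer topology and larger interior), and you have it right: the ``easy'' inclusion in each case is exactly the one that follows from monotonicity of $k_x$.
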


This identity means that, assuming saturation, the second index in a word of the form $k_xi_y$ or $i_xk_y$ is irrelevant in determining the action of the operator.  For this reason, we find it convenient to adopt a \textit{star notation}, and simply write\\

\begin{center} for each $1\leq x,y\leq n$, \fbox{$k_xi_y=k_xi_*$} and \fbox{$i_xk_y=i_xk_*$.}
\end{center}
\vspace{.3cm}

We employ this notation in the following lemma.

\begin{lemma}[IF Lemma] \label{lemma_if}  Let $(X,\tau_1,...,\tau_n)$ be a saturated $n$-topological space.  For each $1\leq x,y\leq n$, 

\begin{center} \fbox{$i_xf_y=i_xf_*$.}
\end{center}
\end{lemma}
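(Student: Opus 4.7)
The plan is to unpack the definition of $f_y$ as an intersection, distribute $i_x$ across that intersection, and then twice apply Lemma \ref{lemma_banakh} (once in each of its two forms) to eliminate dependence on the index $y$.

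More precisely, I would start from the boxed identity $f_y = k_y \wedge c i_y$ from the preliminaries, so that for any $A \subseteq X$,
\begin{equation*}
i_x f_y A \;=\; i_x\bigl(k_y A \cap c\,i_y A\bigr).
\end{equation*}
Since the interior operator $i_x$ distributes over binary intersections, this equals $i_x k_y A \cap i_x c\,i_y A$. Applying the DeMorgan identity $i_x c = c k_x$ to the second factor converts the expression to
\begin{equation*}
i_x f_y A \;=\; i_x k_y A \;\cap\; c\,k_x i_y A.
\end{equation*}

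Now I would invoke Lemma \ref{lemma_banakh} twice: it gives $i_x k_y = i_x k_x$ and $k_x i_y = k_x i_x$, so the right-hand side is equal to $i_x k_x A \cap c\,k_x i_x A$. Reversing the same manipulations (distributing $i_x$ back through and using $ck_x = i_x c$) recognizes this as $i_x(k_x A \cap c\,i_x A) = i_x f_x A$. Since the resulting operator $i_x f_x$ does not involve $y$ at all, this proves $i_x f_y = i_x f_x$ for every $y$, which is exactly what the star notation $i_x f_y = i_x f_*$ is meant to record.

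There is essentially no single ``main obstacle'' in this argument; the only points requiring care are (i) confirming that $i_x$ distributes over intersection (standard for any interior operator) and (ii) making sure the two applications of Lemma \ref{lemma_banakh} are in the correct direction --- one absorbing a closure index and one absorbing an interior index. Everything else is bookkeeping via DeMorgan.
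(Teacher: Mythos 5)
Your proof is correct and follows essentially the same route as the paper's: distribute $i_x$ over the intersection defining $f_y$ and then invoke Lemma \ref{lemma_banakh} to erase the index $y$. The only cosmetic difference is that you start from the representation $f_y = k_y\wedge ci_y$ (requiring one application of each form of Lemma \ref{lemma_banakh}), while the paper uses $f_y = k_y\wedge k_yc$ and applies the form $i_xk_y=i_xk_x$ to both factors at once.
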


\begin{proof}  Since interiors distribute over intersections, by Lemma \ref{lemma_banakh} we have $i_xf_y=i_xk_y\wedge i_xk_y c=i_xk_*\wedge i_xk_*c=i_xf_*$.
\end{proof}

For other types of words, as below, it turns out that the value of $y$ is irrelevant if $y\leq x$, but may matter if $y>x$.

\begin{lemma}[FK Lemma] \label{lemma_fk}  Let $(X,\tau_1,...,\tau_n)$ be a saturated $n$-topological space.  For each $1\leq x,y\leq n$, 

\begin{center} \fbox{$f_xk_y=f_xk_{\max(x,y)}$.}
\end{center}
\end{lemma}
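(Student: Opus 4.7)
The plan is to unfold $f_x$ via its definition $f_x = k_x \wedge c i_x$ from Section 2, thereby reducing the identity to a pair of routine simplifications that can be handled term-by-term in the intersection.

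Concretely, for an arbitrary $A \subseteq X$ I would first write $f_x k_y A = k_x k_y A \cap c i_x k_y A$. Lemma \ref{lemma_basics_unsaturated}(2) handles the first intersectand directly, since $k_x k_y = k_{\max(x,y)}$. For the second intersectand I would invoke Lemma \ref{lemma_banakh} in star-notation form, $i_x k_y = i_x k_*$; since the second index is irrelevant, I am free to replace it with $\max(x,y)$, giving $c i_x k_y = c i_x k_{\max(x,y)}$. Combining the two simplifications yields $f_x k_y A = k_{\max(x,y)} A \cap c i_x k_{\max(x,y)} A$.

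To finish, I would need to recognize the right-hand side as $f_x k_{\max(x,y)} A$. Expanding the latter by the definition of $f_x$ gives $k_x k_{\max(x,y)} A \cap c i_x k_{\max(x,y)} A$, and one more application of Lemma \ref{lemma_basics_unsaturated}(2), in the form $k_x k_{\max(x,y)} = k_{\max(x,\max(x,y))} = k_{\max(x,y)}$, makes the two expressions agree.

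I do not anticipate any genuine obstacle: the lemma is an immediate consequence of the identities already in hand, once one observes that the intersection structure of $f_x = k_x \wedge c i_x$ lets the two ``arms'' be simplified independently. It is worth noting that the conclusion is tautological when $y \geq x$, so the real content of the lemma is the case $y < x$, where it says that an outer $f_x$ effectively coarsens an inner closure $k_y$ all the way up to $k_x$.
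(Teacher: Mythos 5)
Your proof is correct and follows essentially the same route as the paper's: unfold $f_x=k_x\wedge ci_x$, simplify the closure arm with $k_xk_y=k_{\max(x,y)}$ and the interior arm with the saturation identity $i_xk_y=i_xk_*$, and match against the expansion of $f_xk_{\max(x,y)}$. The only cosmetic difference is that the paper splits into the trivial case $y\geq x$ and the case $y<x$, while you handle both uniformly.
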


\begin{proof}  If $y\geq x$ then the statement is trivial.  Otherwise $y<x$, and we compute using Lemmas \ref{lemma_basics_unsaturated} and \ref{lemma_banakh} that $f_xk_y=k_xk_y\wedge ci_xk_y=k_x\wedge ci_xk_*=f_xk_x=f_xk_{\max(x,y)}$.
\end{proof}

For many of our algebraic lemmas involving $k_x$ or $i_x$, we may use DeMorgan's law to instantly deduce a ``dual'' corollary.

\begin{lemma}[FI Lemma]  \label{lemma_fi}  Let $(X,\tau_1,...,\tau_n)$ be a saturated $n$-topological space.  For each $1\leq x,y\leq n$, 

\begin{center} \fbox{$f_xi_y=f_xi_{\max(x,y)}$.}
\end{center}
\end{lemma}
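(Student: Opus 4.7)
The plan is to deduce the FI Lemma from the FK Lemma via a short duality argument, exactly as the paragraph preceding the statement suggests. The key observations are that $i_y = c k_y c$ by DeMorgan's law, and that the boxed identity $f_x c = f_x$ lets us swallow a leading $c$ that appears immediately to the right of a frontier operator.

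First I would expand $f_x i_y$ using DeMorgan: $f_x i_y = f_x c k_y c$. Because $f_x c = f_x$, this simplifies to $f_x k_y c$. Now I can apply the FK Lemma (Lemma \ref{lemma_fk}) to the prefix $f_x k_y$, obtaining $f_x k_{\max(x,y)} c$. Finally, I run the duality backwards: insert a $c$ on the left using $f_x = f_x c$ to get $f_x c k_{\max(x,y)} c$, and recognize this as $f_x i_{\max(x,y)}$ by DeMorgan. Stringing the equalities together gives the desired $f_x i_y = f_x i_{\max(x,y)}$.

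I do not anticipate any real obstacle: the whole argument is a three-line manipulation once the FK Lemma is in hand, and there are no case splits (the statement is symmetric in whether $y \leq x$ or $y > x$, since the FK Lemma already handles both). The only thing to be careful about is to apply $f_x c = f_x$ in the correct direction at each step, i.e.\ to recognize that $c$ may be both deleted from and inserted just to the right of $f_x$ without changing the operator.
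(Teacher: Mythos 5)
Your proposal is correct and follows essentially the same route as the paper: the paper's proof is the one-line chain $f_xi_y=f_xck_yc=f_xk_yc=f_xk_{\max(x,y)}c=f_xci_{\max(x,y)}=f_xi_{\max(x,y)}$, which uses exactly your ingredients ($i_y=ck_yc$, $f_xc=f_x$, and the FK Lemma). The only cosmetic difference is that the paper converts $k_{\max(x,y)}c$ into $ci_{\max(x,y)}$ directly rather than reinserting the $c$ after $f_x$ first, but the two bookkeepings are interchangeable.
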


\begin{proof}  By duality: $f_xi_y=f_xck_yc=f_xk_yc=f_xk_{\max(x,y)}c=f_xci_{\max(x,y)}=f_xi_{\max(x,y)}$.
\end{proof}

\begin{lemma}[FKF Lemma] \label{lemma_fkf}  Let $(X,\tau_1,...,\tau_n)$ be a saturated $n$-topological space.  Then for each $1\leq x,y,z\leq n$,

\begin{center} \fbox{if $y\leq\max(x,z)$, then $f_xk_yf_z=f_xf_z$.}
\end{center}
\end{lemma}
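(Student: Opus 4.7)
The plan is to split into cases based on how $y$ compares with $z$. The hypothesis $y \leq \max(x,z)$ guarantees that either $y \leq z$ or $y \leq x$, and each gives a different route to the conclusion.

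In the easy case $y \leq z$, Lemma \ref{lemma_basics_unsaturated}(3) applies directly (with the roles of its variables renamed) to yield $k_y f_z = f_z$, after which left-multiplication by $f_x$ gives $f_x k_y f_z = f_x f_z$ with nothing else to do.

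The substantive case is $z < y \leq x$. Here I would first use the FK Lemma (Lemma \ref{lemma_fk}) to collapse $f_x k_y = f_x k_{\max(x,y)} = f_x k_x$, reducing the claim to the assertion that $f_x k_x f_z = f_x f_z$. Fix an arbitrary $A \subseteq X$ and let $B = f_z A$. Expanding via $f_x = k_x \wedge c i_x$, one gets $f_x B = k_x B \cap k_x c B$ and $f_x k_x B = k_x B \cap k_x i_x c B$. Since $i_x c B \subseteq c B$ the containment $f_x k_x B \subseteq f_x B$ is automatic, so the real work is showing $f_x B \subseteq k_x i_x c B$.

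This final containment is the main obstacle, and it is exactly where saturation must enter. The key observation is that $B = f_z A = k_z A \cap k_z c A$ is $\tau_z$-closed, so $c B$ is $\tau_z$-open. Given $p \in f_x B$ and any $\tau_x$-open neighborhood $U$ of $p$, we have $U \cap c B \neq \emptyset$ (since $p \in k_x c B$). Because $z < x$ forces $\tau_x \subseteq \tau_z$, the set $U$ is itself $\tau_z$-open, and hence $U \cap c B$ is a nonempty $\tau_z$-open set. Saturation then produces $i_x(U \cap c B) \neq \emptyset$; since $i_x$ distributes over intersections and $i_x U = U$, this simplifies to $U \cap i_x c B \neq \emptyset$, placing $p$ in $k_x i_x c B$ as required. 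Everything else in the argument is symbolic packaging; this one saturation step does all the nontrivial work.
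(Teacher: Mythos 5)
Your proof is correct, but it takes a different route from the paper's in the substantive case. The paper's argument is a three-line algebraic computation: expand $f_xk_yf_z=k_xk_yf_z\wedge ci_xk_yf_z$, collapse $k_xk_y=k_x$ (since $y\leq x$), and then apply the star identity $i_xk_y=i_xk_*$ of Lemma \ref{lemma_banakh} with the free index chosen as $z$, so that $i_xk_*f_z=i_xk_zf_z=i_xf_z$, giving $k_xf_z\wedge ci_xf_z=f_xf_z$. You instead first reduce via the FK Lemma to $f_xk_xf_z=f_xf_z$ and then prove the remaining containment $f_xB\subseteq k_xi_xcB$ by a direct point-set argument with neighborhoods. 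That argument is sound, and it is worth noting that what you establish by hand --- $k_xcB=k_xi_xcB$ for $cB$ a $\tau_z$-open set with $\tau_x\subseteq\tau_z$ --- is precisely an instance of the dual form $k_xi_y=k_xi_*$ of Lemma \ref{lemma_banakh}, since $cB=i_zcB$. So you have re-derived the needed saturation input from first principles rather than citing it; your version is more self-contained and makes visible exactly where saturation enters, while the paper's version is shorter because it leans on the already-established star notation. Both case splits and all intermediate steps in your write-up check out.
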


\begin{proof}  If $y\leq z$ then $k_yf_z=f_z$ by Lemma \ref{lemma_basics_unsaturated}.  Otherwise $y\leq x$, in which case we compute\\

\begin{align*}
f_xk_yf_z &= k_xk_yf_z\wedge ci_xk_yf_z\\
&= k_xf_z\wedge ci_xk_*f_z\\
&= k_xf_z\wedge ci_xf_z = f_xf_z.
\end{align*}
\end{proof}

\begin{lemma}[FIKI/FKIK/FKIF Lemma] \label{lemma_fiki}  Let $(X,\tau_1,...,\tau_n)$ be a saturated $n$-topological space.  For each $1\leq x,y\leq n$,
\begin{itemize}
		\item \fbox{if $y\leq x$, then $f_xi_yk_*i_*=f_xk_xi_*$.}
		\item \fbox{if $y\leq x$, then $f_xk_yi_*k_*=f_xi_xk_*$.}
		\item \fbox{if $y\leq x$, then $f_xk_yi_*f_*=f_xi_xf_*$.}
\end{itemize}
\end{lemma}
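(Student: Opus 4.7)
My plan is to prove all three identities uniformly by applying the defining expansion $f_x = k_x \wedge ci_x$ to both sides, collapsing the star-indexed operators with Banakh's Lemma (Lemma~\ref{lemma_banakh}), and then matching each conjunct separately. The hypothesis $y \leq x$ enters through Lemma~\ref{lemma_basics_unsaturated}(2), which gives $k_xk_y = k_x$ and $i_xi_y = i_x$; these two reductions are exactly what is needed to eliminate the inner $y$-indexed operator that distinguishes the left-hand side from the right. For Parts 1 and 2, the Kuratowski identities $k_xi_xk_xi_x = k_xi_x$ and $i_xk_xi_xk_x = i_xk_x$ close everything out.

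For Part 1, expanding $f_xi_yk_*i_*A$ yields $k_xi_yk_*i_*A \cap ci_xi_yk_*i_*A$. In the first conjunct, repeatedly applying Banakh's lemma pushes every interior/closure index down to $x$, producing $k_xi_xk_xi_xA$, which collapses to $k_xi_xA$ by the first Kuratowski identity. In the second conjunct, $i_xi_y = i_x$ (using $y \leq x$) followed by Banakh reaches $ci_xk_xi_xA$. The right-hand side $f_xk_xi_*A$ expands by the same recipe to the same intersection $k_xi_xA \cap ci_xk_xi_xA$. Part 2 follows by a symmetric chain, this time using $k_xk_y = k_x$ to dispose of the inner $y$ in the first conjunct, and the Kuratowski identity $i_xk_xi_xk_x = i_xk_x$ to finish the second.

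The main obstacle is Part 3, since no Kuratowski identity directly simplifies words ending in $f_*$. After expanding and collapsing indices with Banakh and $k_xk_y = k_x$, both sides will share the first conjunct $k_xi_xf_*A$, so the claim reduces to the equality $i_xk_xi_xf_*A = i_xf_*A$. To settle this I would invoke the IF Lemma (Lemma~\ref{lemma_if}) to rewrite $i_xf_*A = i_xf_xA$, so that it suffices to show $i_xk_xi_xF = i_xF$ for the $\tau_x$-closed set $F = f_xA$. This is a standard regular-open argument: $i_xk_xi_xF$ is $\tau_x$-open and contained in $k_xi_xF \subseteq k_xF = F$, so it lies inside the largest $\tau_x$-open subset of $F$, namely $i_xF$; the reverse inclusion $i_xF \subseteq i_xk_xi_xF$ is automatic from $i_xF \subseteq k_xi_xF$ and the monotonicity and idempotence of $i_x$. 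Plugging this identity back into the expansion above matches the two second conjuncts and completes Part 3.
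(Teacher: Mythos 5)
Your proofs of all three identities are correct. For the first item you follow the paper's computation essentially verbatim: expand $f_x$ as $k_x\wedge ci_x$, collapse the starred indices with Lemma \ref{lemma_banakh}, use $i_xi_y=i_x$ (where $y\leq x$ enters) in the second conjunct, and finish with the Kuratowski identity. For the second item you redo the symmetric computation directly where the paper simply cites duality; these are the same argument. The only genuine divergence is the third item. The paper disposes of it in one line by inserting a harmless $k_*$ (since $i_*f_*=i_*k_*f_*$) and then quoting the second item: $f_xk_yi_*f_*=f_xk_yi_*k_*f_*=f_xi_xk_*f_*=f_xi_xf_*$. You instead expand from scratch, reduce the claim to $i_xk_xi_xF=i_xF$ for the $\tau_x$-closed set $F=f_xA$, and prove that by a regular-open-set argument. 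This works, and your set-theoretic lemma is a nice self-contained fact; but note it also follows purely algebraically from identities already on hand, namely $i_xk_xi_xf_x=i_xk_xi_xk_xf_x=i_xk_xf_x=i_xf_x$ using $k_xf_x=f_x$ and Kuratowski's $i_xk_xi_xk_x=i_xk_x$, which would keep the whole proof at the level of symbolic manipulation the paper favors. The paper's reduction-to-item-two is shorter; your version makes the mechanism (why a closed set at the end tames the extra $k_xi_x$) more transparent.
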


\begin{proof}  For the first item, by Lemmas \ref{lemma_basics_unsaturated} and \ref{lemma_banakh}, compute\\

\begin{align*}
f_xi_yk_*i_* &= k_xi_*k_*i_*\wedge k_xci_yk_*i_*\\
&= k_xi_*\wedge ci_xi_yk_*i_*\\
&= k_xk_xi_*\wedge ci_xk_xi_*\\
&= f_xk_xi_*.
\end{align*}
\vspace{.3cm}

The second item follows from the first by duality.  The third also follows from the first, by observing that $f_xk_yi_*f_*=f_xk_yi_*k_*f_*=f_xi_xk_*f_*=f_xi_xf_*$.
\end{proof}

The next lemma is a generalization of Gaida-Eremenko's observation, together with its dual statement.

\begin{lemma}[IFK/IFI Lemma] \label{lemma_zero}  Let $(X,\tau_1,...,\tau_n)$ be a saturated $n$-topological space.
\begin{itemize}
		\item For any $1\leq x,y,z\leq n$, \fbox{$i_x f_y k_z=0$.}
		\item For any $1\leq x,y,z\leq n$, \fbox{$i_x f_y i_z=0$.}
\end{itemize}
\end{lemma}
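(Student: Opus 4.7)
The plan is to prove the first bulleted identity by direct calculation, expanding $f_y$ as $k_y\wedge ci_y$ and applying the Banakh Lemma (Lemma \ref{lemma_banakh}) repeatedly, and then to obtain the second identity essentially for free from the first via the duality $i_z=ck_zc$ together with the identity $f_yc=f_y$ recorded at the start of Section 2.

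For $i_x f_y k_z=0$, I would begin by writing $f_y k_z A = k_y k_z A \cap c i_y k_z A$, replacing $k_y k_z$ by $k_{\max(y,z)}$ via Lemma \ref{lemma_basics_unsaturated}(2) and $i_y k_z$ by $i_y k_y$ via Lemma \ref{lemma_banakh}. Then I would apply $i_x$, distributing it over the intersection (interior distributes over intersection, and $i_xc=ck_x$). Every remaining occurrence of $k_{\max(y,z)}$, $k_y$, and $i_y$ can then be collapsed to $k_x$ or $i_x$ through repeated applications of Lemma \ref{lemma_banakh} in the forms $i_xk_{\max(y,z)}=i_xk_x$, $k_xi_y=k_xi_x$, and $i_xk_y=i_xk_x$, leaving the expression $i_xk_xA\cap ck_xi_xk_xA$.

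The identity then reduces to the observation that $i_xk_xA\subseteq k_xi_xk_xA$, which is immediate because the $\tau_x$-closure of a set always contains the set. Hence $i_xk_xA$ is disjoint from the complement $ck_xi_xk_xA$, and the intersection is empty regardless of $A$, $y$, and $z$.

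For the second identity, the duality $i_z=ck_zc$ combined with $f_yc=f_y$ gives $i_x f_y i_z = i_x f_y c k_z c = i_x f_y k_z c$, and applying the first identity to the set $cA$ yields $i_x f_y i_z A=\emptyset$ for every $A$. I do not anticipate any real obstacle: the argument is a bookkeeping exercise in systematically collapsing subscripts to $x$ via Lemma \ref{lemma_banakh}, after which the classical fact that a set is contained in its closure finishes the job.
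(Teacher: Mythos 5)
Your argument is correct, but it follows a genuinely different route from the paper's. The paper first reduces to the single index $x=n$ by a direct appeal to saturation (if $i_xf_yk_zA$ were a nonempty $\tau_x$-open set, its $\tau_n$-interior would be nonempty, so $i_nf_yk_zA=i_ni_xf_yk_zA\neq\emptyset$), then normalizes the remaining subscripts using the IF Lemma (Lemma \ref{lemma_if}) and the FK Lemma (Lemma \ref{lemma_fk}), and finally invokes the classical one-topology identity $i_nf_nk_n=0$ of Gaida--Eremenko, which was cited in Section 2 without proof. You instead expand $f_yk_z$ from the definition $f_y=k_y\wedge ci_y$, collapse every subscript to $x$ by repeated use of Lemma \ref{lemma_banakh} (this is where saturation enters for you), arrive at $i_xf_yk_zA=i_xk_xA\cap ck_xi_xk_xA$, and close with the elementary containment $i_xk_xA\subseteq k_x i_xk_xA$. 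Your version is more self-contained: it does not presuppose the Gaida--Eremenko identity but rather reproves it (the case $x=y=z$) as a byproduct, and it makes no direct use of the saturation axiom beyond what is already packaged in Lemma \ref{lemma_banakh}. The paper's version is shorter given the lemmas already on the table and illustrates the ``reduce to $\tau_n$ by saturation'' technique that recurs elsewhere. Your treatment of the second bullet, $i_xf_yi_z=i_xf_yck_zc=i_xf_yk_zc=0$, is exactly the duality the paper leaves implicit, and it is handled correctly.
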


\begin{proof}  It suffices to prove that $i_nf_yk_z=0$, for if there existed a set $A\subseteq X$ with $i_xf_y k_zA\neq\emptyset$, then by saturation, we would have $i_nf_yk_zA=i_ni_xf_yk_zA\neq\emptyset$, which would contradict $i_nf_yk_z=0$.

We can use Lemma \ref{lemma_if} to rewrite $i_nf_yk_z=i_nf_*k_z=i_nf_nk_z$.  Then use Lemma \ref{lemma_fk} to write $i_nf_yk_z=i_nf_nk_n=0$.
\end{proof}

\begin{lemma}[FFK/FFI/FFF Lemma] \label{lemma_ffk}  Let $(X,\tau_1,...,\tau_n)$ be a saturated $n$-topological space.  For each $1\leq x,y,z\leq n$, the following hold.
\begin{itemize}
		\item \fbox{$f_xf_yk_z=k_xf_yk_z$.}
		\item \fbox{If $x\leq y$, then $f_xf_yk_z=f_yk_z$.}
		\item \fbox{$f_xf_yi_z=k_xf_yi_z$.}
		\item \fbox{If $x\leq y$, then $f_xf_yi_z=f_yi_z$.}
		\item \fbox{$f_xf_yf_z=k_xf_yf_z$.}
		\item \fbox{If $x\leq y$, then $f_xf_yf_z=f_yf_z$.}
\end{itemize}
\end{lemma}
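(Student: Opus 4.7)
The plan is to reduce all six identities to two main ingredients: the decomposition $f_w = k_w \wedge ci_w$, and the vanishing results of the IFK/IFI Lemma (Lemma \ref{lemma_zero}). Expanding $f_x f_y (\text{suffix}) = (k_x \wedge ci_x) f_y (\text{suffix}) = k_x f_y (\text{suffix}) \cap ci_x f_y (\text{suffix})$ will drop the second term to the whole space $X$ whenever $i_x f_y (\text{suffix}) = 0$, leaving $f_x f_y (\text{suffix}) = k_x f_y (\text{suffix})$. The first four bullets then correspond to suffixes $k_z$ and $i_z$ and fall out immediately from Lemma \ref{lemma_zero}.

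First I would verify the unconditional identities (items 1 and 3): distribute $f_x$ as above, apply Lemma \ref{lemma_zero} to conclude $i_x f_y k_z = 0$ and $i_x f_y i_z = 0$, so $ci_x f_y k_z = ci_x f_y i_z = 1$, and the intersection with $1$ is trivial. Then for items 2 and 4, I would combine items 1 and 3 with Lemma \ref{lemma_basics_unsaturated}(3): when $x\leq y$, $k_x f_y = f_y$, which immediately strips the leading $k_x$ off $k_x f_y k_z$ and $k_x f_y i_z$.

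The potentially tricky case is the FFF statement (items 5 and 6), since Lemma \ref{lemma_zero} was only stated with suffixes $k_z$ or $i_z$, not $f_z$. The key observation to unlock it is that $f_z A$ is a $\tau_z$-closed set, hence $k_z f_z = f_z$. So I can rewrite $i_x f_y f_z = i_x f_y (k_z f_z) = (i_x f_y k_z) f_z = 0 \cdot f_z = 0$, reducing the FFF case to the FFK case. From there the same two-step procedure produces $f_x f_y f_z = k_x f_y f_z$, and the $x\leq y$ variant again comes from $k_x f_y = f_y$.

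I do not anticipate any serious obstacle: every reduction is algebraic and uses already-stated identities. The one subtlety worth flagging is recognizing that $k_z f_z = f_z$ (the topological fact that a frontier is closed), which is what allows the FFF case to be absorbed into the IFK vanishing pattern. Once that observation is made, all six items follow by the same template of distributing $f_x$ across an intersection and collapsing the complement-interior branch to $1$.
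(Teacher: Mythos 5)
Your proposal is correct and follows essentially the same route as the paper: expand $f_x=k_x\wedge ci_x$, collapse the $ci_xf_y(\cdot)$ branch to $1$ via Lemma \ref{lemma_zero}, deduce the $x\leq y$ cases from $k_xf_y=f_y$, and reduce FFF to FFK via $f_z=k_zf_z$. The only cosmetic difference is that you handle the $i_z$-suffix items directly from the IFI half of Lemma \ref{lemma_zero}, where the paper invokes duality.
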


\begin{proof}  It suffices to prove the first statement, as the second follows immediately; the third and fourth follow from duality; and the fifth and sixth follow from the observation that $f_xf_yf_z=f_xf_yk_zf_z$.

Using Lemma \ref{lemma_zero}, we compute\\

\begin{align*}
		f_xf_yk_z &= k_xf_yk_z\wedge k_xcf_yk_z\\
    &= k_xf_yk_z\wedge ci_xf_yk_z\\
		&= k_xf_yk_z\wedge c0\\
		&= k_xf_yk_z\wedge 1 = k_xf_yk_z.
\end{align*}
\end{proof}

\begin{lemma}[FKFK/FKFI Lemma] \label{lemma_fkfk}  Let $(X,\tau_1,...,\tau_n)$ be a saturated $n$-topological space.
\begin{itemize}
		\item For any $1\leq x,y,z,w\leq n$, \fbox{$f_xk_yf_zk_w=k_{\max(x,y)}f_zk_w$.}
		\item For any $1\leq x,y,z,w\leq n$, \fbox{$f_xk_yf_zi_w=k_{\max(x,y)}f_zi_w$.}
\end{itemize}
\end{lemma}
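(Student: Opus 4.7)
The plan is to expand the outer frontier operator as $f_x=k_x\wedge ci_x$ and show that the second (complement-interior) conjunct collapses to $1$, so that the intersection returns the first conjunct unchanged. Applied to the first identity, this splits the left-hand side as
\begin{align*}
f_xk_yf_zk_w = k_xk_yf_zk_w \wedge ci_xk_yf_zk_w.
\end{align*}
By Lemma \ref{lemma_basics_unsaturated}(2) the first conjunct simplifies immediately to $k_{\max(x,y)}f_zk_w$, which is the claimed right-hand side, so the whole task reduces to showing $i_xk_yf_zk_w=0$.

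For that, the key observation is that Banakh's Lemma \ref{lemma_banakh} allows us to replace the second index in $i_xk_y$ by any value we like. I would choose that value to be $z$, obtaining $i_xk_y=i_xk_z$; then Lemma \ref{lemma_basics_unsaturated}(3) in the form $k_zf_z=f_z$ lets us absorb the $k_z$ into the adjacent frontier, so that $i_xk_yf_zk_w=i_xf_zk_w$. This last expression is $0$ by the IFK form of Lemma \ref{lemma_zero}, which is exactly what is needed to make the second conjunct above equal to $c0=1$.

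For the second identity $f_xk_yf_zi_w=k_{\max(x,y)}f_zi_w$, the same three-step argument applies verbatim, except that the final vanishing step invokes the IFI form of Lemma \ref{lemma_zero} (namely $i_xf_zi_w=0$) in place of the IFK form. I do not anticipate any real obstacle here: the proof is a routine chaining of Lemmas \ref{lemma_basics_unsaturated}, \ref{lemma_banakh}, and \ref{lemma_zero}, and the only insight is that Banakh's lemma is used in a slightly non-obvious way, to align the second index on $i_xk_y$ with the frontier index $z$ rather than with $x$, so that the absorption $k_zf_z=f_z$ and the IFK/IFI vanishing identity can together collapse the interior-complement conjunct to $1$.
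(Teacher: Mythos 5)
Your proposal is correct and follows essentially the same route as the paper: expand $f_x=k_x\wedge ci_x$, reduce the first conjunct via $k_xk_y=k_{\max(x,y)}$, and kill the second conjunct by using Lemma \ref{lemma_banakh} to set the index of $k_y$ to $z$, absorbing it into $f_z$, and invoking Lemma \ref{lemma_zero}. The "non-obvious" index alignment you highlight is exactly what the paper's star notation $i_xk_*f_z=i_xf_z$ encodes.
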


\begin{proof}  Using Lemma \ref{lemma_zero} again,\\

\begin{align*}
f_xk_yf_zk_w &= k_xk_yf_zk_w \wedge ci_xk_yf_zk_w\\
&= k_{\max(x,y)}f_zk_w\wedge ci_xk_*f_zk_w\\
&= k_{\max(x,y)}f_zk_w\wedge ci_xf_zk_w\\
&= k_{\max(x,y)}f_zk_w\wedge c0\\
&= k_{\max(x,y)}f_zk_w\wedge 1 = k_{\max(x,y)}f_zk_w.
\end{align*}
\end{proof}

\section{The Case of Two Topologies}

In this section we look closely at the special case where $n=2$, and solve the closure-complement-frontier problem for a saturated $2$-topological space.  The prototypical example is $(\mathbb{R},\tau_s,\tau_u)$ where $\tau_s=$ the Sorgenfrey topology (in which basic open neighborhoods have the form $[a,b)=\{x\in\mathbb{R}:a\leq x<b\}$) and $\tau_u=$ the usual Euclidean topology.

It is instructive to use Lemmas \ref{lemma_basics_unsaturated} through \ref{lemma_fkfk} to write out the distinct elements of $\mathcal{KF}_2^0$ explicitly.  There turn out to be at most $60$ of them.  This is an enjoyable computation and we postpone the details until the more general case of Section 4, where $n$ is arbitrary.  The reader may verify the truth of the following proposition by observing that applying any of the generators $k_x$, $i_x$, or $f_x$ ($x=1,2$) to the left of any of the $60$ words listed below will always simply produce another word on the list, and thus the entire monoid $\mathcal{KF}_2^0$ is accounted for.

\begin{prop}  The monoid $\mathcal{KF}_2^0$ consists of at most $60$ elements, which are listed in the table below.  Consequently, the monoid $\mathcal{KF}_2$ consists of at most $120$ elements.

\begin{center}
\begin{tabular}{|c|p{11.2cm}|c|}
    \hline
    Word Length & Operators & Count \\
    \hline
    0 & $\Id$ & 1\\
    \hline
    1 & $i_1,i_2,$ \hspace{0.6cm} $k_1,k_2,$ \hspace{0.6cm} $f_1,f_2$ & 6\\
    \hline
    2 & $k_1i_*,k_2i_*,$ \hspace{0.6cm} $i_1k_*, i_2k_*,$ \hspace{0.6cm} $f_1i_1, f_1i_2, f_2i_2,$ \hspace{0.6cm} $i_1f_*, i_2f_*,$\newline $f_1k_1, f_1k_2, f_2k_2,$ \hspace{0.6cm} $k_2f_1,$ \hspace{0.6cm} $f_1f_1, f_1f_2, f_2f_1, f_2f_2$ & 17\\
    \hline
    3 & $i_1k_*i_*, i_2k_*i_*,$ \hspace{0.6cm} $k_1i_*k_*, k_2i_*k_*,$ \hspace{0.6cm} $f_1k_1i_*, f_1k_2i_*, f_2k_2i_*,$ \newline $f_1i_1k_*, f_1i_2k_*, f_2i_2k_*,$ \hspace{0.6cm} $0,$ \hspace{0.6cm} $k_2f_1i_1, k_2f_1i_2,$ \newline $k_1i_*f_*, k_2i_*f_*,$ \hspace{0.6cm} $k_2f_1k_1, k_2f_1k_2,$ \hspace{0.6cm} $f_1k_2f_1,$ \newline $k_2f_1f_1, k_2f_1f_2,$ \hspace{0.6cm} $f_1i_1f_*,f_1i_2f_*, f_2i_2f_*$ & 23\\
    \hline
    4 & $f_1i_2k_*i_*,$ \hspace{0.6cm} $f_1k_2i_*k_*,$ \hspace{0.6cm} $k_2f_1k_1i_*, k_2f_1k_2i_*,$ \hspace{0.6cm} $k_2f_1i_1k_*,k_2f_1i_2k_*,$\newline $f_1k_2i_*f_*,$ \hspace{0.6cm} $k_2f_1i_1f_*, k_2f_1i_2f_*,$ \hspace{0.6cm} $k_2f_1k_2f_1$ & 10\\
    \hline
    5 & $k_2f_1k_2i_{*}k_{*}$, \hspace{0.6cm} $k_2f_1i_2k_{*}i_{*}$, \hspace{0.6cm} $k_2f_1k_2i_{*}f_{*}$ & 3\\
    \hline
\end{tabular}
\end{center}
\end{prop}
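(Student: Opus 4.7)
The plan is to prove the upper bound $\#\mathcal{KF}_2^0 \leq 60$ by establishing that the set $S$ of $60$ operators displayed in the table is closed under left-multiplication by each of the six generators $k_1, k_2, i_1, i_2, f_1, f_2$. Since $\Id \in S$ and $\mathcal{KF}_2^0$ is the monoid these six generators produce, a straightforward induction on word length then gives $\mathcal{KF}_2^0 \subseteq S$. Concretely, one must verify $360 = 60 \times 6$ products $gw$ with $g$ a generator and $w \in S$, each reducing via the identities of Section~2 to some $w' \in S$.

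I would organize the case analysis by the prepended generator $g$. When $g = k_x$ or $g = i_x$, the dominant identities are the nesting laws $k_xk_y = k_{\max(x,y)}$ and $i_xi_y = i_{\max(x,y)}$, the Kuratowski idempotency $k_xi_xk_xi_x = k_xi_x$, the Banakh identities $k_xi_y = k_xi_*$ and $i_xk_y = i_xk_*$, the IF Lemma, and the monotonicity identity $k_xf_y = f_y$ valid when $x \leq y$. When $g = f_x$, the applicable tools are the FK, FI, FKF, FIKI/FKIK/FKIF, FFK/FFI/FFF, and FKFK/FKFI Lemmas, together with the IFK/IFI Lemma, which collapses any word containing $i_xf_yk_z$ or $i_xf_yi_z$ to the zero operator $0 \in S$.

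The crucial structural observation that makes the table stabilize at length $5$ is precisely this zero-lemma absorption: prepending any generator to one of the three length-$5$ words either produces a word of length $\leq 5$ already in $S$ via the nesting and FKFK-type identities, or else produces an internal $i_xf_yk_z$ or $i_xf_yi_z$ subword and hence collapses to $0$. For example, one verifies that $i_1 \cdot f_1k_2i_*k_* = i_1 f_* k_2 i_* k_* = i_1f_1k_2 i_* k_* = 0$ via the IF Lemma and the IFK Lemma, while $k_1 \cdot k_2f_1k_2i_*k_* = k_2 f_1 k_2 i_* k_*$ by the nesting law, keeping us inside $S$.

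The main obstacle is bookkeeping rather than mathematical subtlety: all $360$ reductions are mechanical given the Section~2 toolkit, but the check is tedious; the paper explicitly defers it to the more systematic treatment of arbitrary $n$ in Section~4, from which the $n=2$ case follows as $p(2) = 60$. The nontrivial content lies in having produced the correct $60$-element candidate table to begin with, which is done by running the closure procedure forward from $\Id$ until no new reduced words appear.
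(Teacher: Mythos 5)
Your proposal matches the paper's own argument: the paper likewise asserts that the table is verified by checking that left-multiplication by each generator $k_x$, $i_x$, $f_x$ sends every listed word back into the list, deferring the full mechanical check to the general-$n$ treatment of Section 4. Your identification of the key reduction tools (the nesting laws, the Banakh and IF Lemmas, and the IFK/IFI zero-collapse) and your sample computations are consistent with the lemmas the paper uses, so this is essentially the same proof.
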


It is also straightforward to check, on a case-by-case basis, that the $60$ operators in $\mathcal{KF}_2^0$ are distinct, in the sense that for any $\omega_1,\omega_2$ as in the table above with $\omega_1\neq\omega_2$, there exists a subset $A^{\omega_1,\omega_2}$ of some $2$-topological space $(X,\tau_1,\tau_2)$ for which $\omega_1A^{\omega_1,\omega_2}\neq \omega_2A^{\omega_1,\omega_2}$.

Combining this observation with the simple lemma below, we obtain the stronger fact that there exists a $2$-topological space with an initial subset $A$ which distinguishes all of the operators in $\mathcal{KF}_2$ simultaneously.

\begin{lemma} \label{lemma_disjoint_union}  Suppose that for any distinct pair of operators $\omega_1,\omega_2\in\mathcal{KF}_n^0$, there exists a saturated $n$-topological space $X^{\omega_1,\omega_2}$ and a subset $A^{\omega_1,\omega_2}\subseteq X^{\omega_1,\omega_2}$ in which $\omega_1A^{\omega_1,\omega_2}\neq \omega_2A^{\omega_1,\omega_2}$.  Then there exist a saturated $n$-topological space $X$ and a subset $A\subseteq X$ such that $\omega_1A\neq \omega_2A$, for each pair of distinct operators $\omega_1,\omega_2\in\mathcal{KF}_n^0$.
\end{lemma}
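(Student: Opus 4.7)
The plan is a disjoint-union construction. For each ordered pair $\alpha=(\omega_1,\omega_2)$ of distinct operators in $\mathcal{KF}_n^0$, invoke the hypothesis to obtain a saturated $n$-topological space $(X^\alpha,\tau_1^\alpha,\ldots,\tau_n^\alpha)$ and a set $A^\alpha\subseteq X^\alpha$ with $\omega_1A^\alpha\neq\omega_2A^\alpha$. Define $X:=\bigsqcup_\alpha X^\alpha$ and $A:=\bigsqcup_\alpha A^\alpha$, and topologize $X$ by the disjoint-union topologies: $U\subseteq X$ is $\tau_j$-open iff $U\cap X^\alpha$ is $\tau_j^\alpha$-open for every $\alpha$, so that each piece $X^\alpha$ is clopen in all $n$ topologies on $X$ simultaneously. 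The chain $\tau_1\supseteq\cdots\supseteq\tau_n$ passes through from each summand, and saturation likewise: a nonempty $\tau_x$-open $U\subseteq X$ meets some $X^{\alpha_0}$ in a nonempty $\tau_x^{\alpha_0}$-open set, whose nonempty $\tau_y^{\alpha_0}$-interior lies inside $i_yU$ because $X^{\alpha_0}$ is clopen.

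The only real content of the proof is the verification that every operator $o$ built from the generators $k_j,i_j,f_j,c$ acts componentwise on $X$: for every $B\subseteq X$ and every $\alpha$,
\[
(oB)\cap X^\alpha \;=\; o^\alpha(B\cap X^\alpha),
\]
where $o^\alpha$ denotes the analogous operator computed inside the subspace $X^\alpha$. This is immediate on each generator since the $X^\alpha$ are clopen: closure and interior commute with intersection against a clopen set, complement trivially respects the decomposition, and frontier is built from closure and complement. A routine induction on word length then upgrades the identity to all of $\mathcal{KF}_n^0$ (and, in fact, to $\mathcal{KF}_n$).

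Granted this componentwise identity, the conclusion is immediate. Fix any distinct $\omega_1,\omega_2\in\mathcal{KF}_n^0$ and set $\alpha=(\omega_1,\omega_2)$; then
\[
\omega_1A\cap X^\alpha \;=\; \omega_1A^\alpha \;\neq\; \omega_2A^\alpha \;=\; \omega_2A\cap X^\alpha,
\]
so $\omega_1A\neq\omega_2A$ in $X$, as required. There is no substantive obstacle --- the construction is essentially bookkeeping, and the only mildly subtle point is the componentwise behaviour of the four generators, which reduces entirely to the pieces $X^\alpha$ being clopen in every $\tau_j$.
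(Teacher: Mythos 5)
Your proof is correct and follows essentially the same route as the paper: form the disjoint union of the separating spaces $X^{\omega_1,\omega_2}$ and of the sets $A^{\omega_1,\omega_2}$, and use the fact that all operators act componentwise on the clopen summands to transfer each separation to the big space. The paper compresses the componentwise step into a single symmetric-difference inclusion $(\omega_1A)\Delta(\omega_2A)\supseteq(\omega_1A^{\omega_1,\omega_2})\Delta(\omega_2A^{\omega_1,\omega_2})$, whereas you spell out the induction on word length and the preservation of saturation, but the substance is identical.
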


\begin{proof}  If the assumption is true, then we can construct the $n$-topological disjoint union $X=\displaystyle\bigcup_{\substack{\omega_1,\omega_2\in\mathcal{KF}_n^0 \\ \omega_1\neq \omega_2}}X^{\omega_1,\omega_2}$ and form the initial set $A=\displaystyle\bigcup_{\omega_1\neq \omega_2}A^{\omega_1,\omega_2}$.  Then for any operators $\omega_1\neq\omega_2$ in $\mathcal{KF}_n^0$, we have $(\omega_1A)\Delta(\omega_2A)\supseteq(\omega_1A^{\omega_1,\omega_2})\Delta(\omega_2A^{\omega_1,\omega_2})\neq\emptyset$ (where $\Delta$ denotes the symmetric difference), and therefore $\omega_1A\neq\omega_2A$.
\end{proof}

Despite the preceding, we would like to follow the tradition of the closure-complement theorem by exhibiting an explicit initial set $A\subseteq\mathbb{R}$ which simultaneously distinguishes the operators in $\mathcal{KF}_2$.

\begin{example}[An Initial Set For $\mathcal{KF}_2$ in the Usual/Sorgenfrey Line] \label{ex_usual_sorg}  We consider the $2$-topological space $(\mathbb{R},\tau_1,\tau_2)$ where $\tau_1=\tau_s$ is the Sorgenfrey topology and $\tau_2=\tau_u$ is the usual Euclidean topology.  We define\\

 $$S^0 = \bigcup\limits_{k=0}^{\infty} \left(\frac{1}{3^{2k+1}}, \frac{1}{3^{2k}}\right), S^1 = \bigcup\limits_{k=0}^{\infty} \left[\frac{1}{3^{2k+1}}, \frac{1}{3^{2k}}\right), S^2 = \bigcup\limits_{k=0}^{\infty} \left[\frac{1}{3^{2k+1}}, \frac{1}{3^{2k}}\right], S^* = \bigcup\limits_{k=0}^{\infty} \left(\frac{1}{3^{2k+1}}, \frac{1}{3^{2k}}\right],$$

$$T^0=\bigcup\limits_{k=1}^{\infty} \left(\frac{1}{3^{2k}}, \frac{2}{3^{2k}}\right), T^1=\bigcup\limits_{k=1}^{\infty} \left[\frac{1}{3^{2k}},\; \frac{2}{3^{2k}}\right),  T^2=\bigcup\limits_{k=1}^{\infty} \left[\frac{1}{3^{2k}}, \frac{2}{3^{2k}}\right],  T^*=\bigcup\limits_{k=1}^{\infty} \left(\frac{1}{3^{2k}}, \frac{2}{3^{2k}}\right],$$
\vspace{.3cm}

\noindent and we take the following initial set:

\begin{center}
    \begin{align*}
        A &= \left(S^0 \cap \mathbb{Q}\right) \cup T^1 \cup \left(\left(2-S^0\right) \cap \mathbb{Q}\right) \cup \left(2-T^0\right) \cup \left(\left(2,3\right) \cap \mathbb{Q}\right) \cup \{4\} \cup (5,6) \cup (6,7) \\& \cup \bigcup\limits_{n=0}^{\infty} \left(\left(\frac{1}{2^{n+2}}S^2 + 8 - \frac{1}{2^n}\right)\cap \mathbb{Q}\right) \cup \bigcup\limits_{n=0}^{\infty} \left(\frac{1}{2^{n+2}}S^2 + 10 - \frac{1}{2^n}\right) \cup (10,11).
    \end{align*}
\end{center}
\vspace{.3cm}

It is possible to verify by hand that applying the $60$ operators of the monoid $\mathcal{KF}_2^0$ to $A$ yields $60$ distinct sets.  The results of such a computation appear in a previous draft of this paper (posted August 3, 2019) accessible via \url{arXiV.org}.  Bowron, in private communication, has also provided us with an elegant and brief computer-assisted verification.  Rather than presenting such a verification here, we will turn to a stronger result, by first considering the natural partial order on the monoid $\mathcal{KF}_n^0$.
\end{example}

The partial order is defined as follows: for every $o_1,o_2\in\mathcal{KF}_n^0$,\\

\begin{center} $o_1\leq o_2$ \hspace{0.6cm} if and only if \hspace{0.6cm} $o_1A\subseteq o_2A$ for every $A\subseteq X$.
\end{center}
\vspace{.3cm}

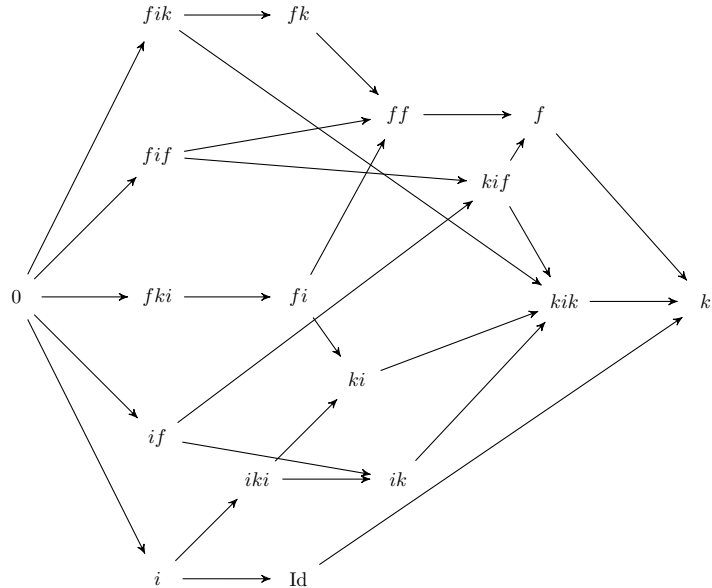
\begin{figure}[b] \label{figure_KF1}
\begin{tikzpicture}[->,>=stealth',shorten >=1pt,auto,node distance=2.5cm,
        scale = .75,transform shape, state without output/.append style={draw=none}]
\node[state] (ifk) [] {$0$};
\node[state] (fki) [right of=ifk] {$fki$};
\node[state] (fif) [above of=fki] {$fif$};
\node[state] (fik) [above of=fif] {$fik$};
\node[state] (if) [below of=fki] {$if$};
\node[state] (fi) [right of=fki] {$fi$};
\node[state] (fk) [right of=fik] {$fk$};
\node[state] (ff) [below right of=fk] {$ff$};
\node[state] (kif) [position=-35:{1.2cm} from ff] {$kif$};
\node[state] (f) [right of=ff] {$f$};

\node[state] (i) [below of=if] {$i$};
\node[state] (Id) [right of=i] {$\Id$};
\node[state] (iki) [above right of=i] {$iki$};
\node[state] (ik) [right of=iki] {$ik$};
\node[state] (ki) [above right of=iki] {$ki$};
\node[state] (kik) [position=-60:{1.5cm} from kif] {$kik$};
\node[state] (k) [right of=kik] {$k$};

\path
(ifk) edge              node {} (fki)
(ifk) edge              node {} (fik)
(ifk) edge              node {} (if)
(ifk) edge              node {} (fif)
(ifk) edge							node {} (i)
(fki) edge              node {} (fi)
(fik) edge              node {} (fk)
(fik) edge							node {} (kik)
(if) edge              node {} (kif)
(if) edge 						node {} (ik)
(fif) edge              node {} (ff)
(fif) edge              node {} (kif)
(fi) edge              node {} (ff)
(fi) edge							node {} (ki)
(fk) edge              node {} (ff)
(ff) edge              node {} (f)
(kif) edge              node {} (f)
(kif) edge							node {} (kik)
(i) edge 								node {} (iki)
(i) edge								node {} (Id)
(Id) edge								node {} (k)
(iki) edge							node {} (ki)
(iki) edge							node {} (ik)
(ki) edge								node {} (kik)
(ik) edge								node {} (kik)
(kik) edge							node {} (k)
(f) edge							node {} (k);

\end{tikzpicture}
\caption{The partial ordering on the $17$ operators of $\mathcal{KF}_1^0$, which was computed by Gaida and Eremenko but did not appear in the printed version of their article \cite{Gaida_Eremenko_1974a}; see also \cite{Eremenko_2020a}.  Subscripts are omitted from the notation since only one topology is involved.}
\end{figure}

The partial orderings on $\mathcal{K}_1^0$, $\mathcal{KF}_1^0$ (see Figure 1), and other related monoids have been diagrammed by various authors; see especially \cite{Gardner_Jackson_2008a} and \cite{Eremenko_2020a}.  It is clear that $\mathcal{KF}_n$ has a minimal element $0$ and a maximal element $k_n$, and that $0\leq i_n\leq...\leq i_1\leq \Id\leq k_1\leq ...\leq k_n$.  It is also clear that for any set operator $o$ we have $i_jo\leq o\leq k_jo$.

By the definition, for any operators $o_1,o_2,o_3$, if $o_1\leq o_2$ then $o_1o_3\leq o_2o_3$, so order is preserved by multiplication on the right.  The operators $i_j$ and $k_j$ ($1\leq j\leq n$) are also left order-preserving in the sense that if $o_1\leq o_2$, then $i_jo_1\leq i_jo_2$ and $k_jo_1\leq k_jo_2$.  On the other hand, $f_j$ is not left order-preserving in general.

\begin{example}[Exhibiting the Partial Order on $\mathcal{KF}_2^0$] \label{example_bowron}

We will now show there exists a set $A$ in a $2$-topological space with the property that $o_1\leq o_2$ if and only if $o_1A\subseteq o_2A$, for each $o_1,o_2\in\mathcal{KF}_2^0$.  In particular, the $60$ operators of $\mathcal{KF}_2^0$ applied to $A$ yield $60$ distinct sets.

We first present a list of apparently non-obvious inequalities in the partially ordered set $\mathcal{KF}_2^0$.

\begin{prop}  The following relations hold in any saturated $2$-topological space $(X,\tau_1,\tau_2)$:
\begin{enumerate}[(a)]
    \item $f_1i_1 \leq f_1i_2$ and $f_1k_1 \leq f_1k_2$;
    \item $f_1k_1i_* \leq f_1i_2k_*i_*$ and $f_1i_1k_* \leq f_1k_2i_*k_*$;
    \item $f_1f_1 \leq f_1k_2f_1$;
		\item $f_1k_2f_1\leq f_1f_2$;
		\item $f_1k_2i_*k_*\leq f_1k_2$ and $f_1i_2k_*i_*\leq f_1i_2$;
		\item $f_1k_2\leq f_1f_2$.
\end{enumerate}
\end{prop}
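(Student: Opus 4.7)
The plan is to handle all six statements by a uniform recipe: each operator on each side begins with $f_1$, so expanding $f_1 = k_1 \wedge c i_1$ writes the two sides of each alleged inequality as intersections of a ``closure half'' $k_1(\cdots) A$ and a complemented ``interior half'' $c i_1(\cdots) A$. I would then simplify each half using the toolkit already developed: Banakh's identity $k_x i_y = k_x i_*$ and $i_x k_y = i_x k_*$; the Kuratowski collapses $k_1 i_* k_* i_* = k_1 i_*$ and $i_1 k_* i_* k_* = i_1 k_*$ (the star forms of $k_1 i_1 k_1 i_1 = k_1 i_1$); the absorption $k_x f_y = f_y$ for $x \leq y$ and the dual $k_1 f_1 A = f_1 A$; the IF identity $i_x f_y = i_x f_*$; and the basic order relations $k_1 \leq k_2$ and $i_2 \leq i_1$ induced by $\tau_1 \supseteq \tau_2$. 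In cases (a)--(e) this simplification will make the two sides share one of the two halves outright, reducing the whole inequality to a single one-line inclusion.

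Concretely, (a) expands to $f_1 i_1 A = k_1 i_* A \cap k_1 c A$ versus $f_1 i_2 A = k_1 i_* A \cap k_2 c A$, and the claim is $k_1 c A \subseteq k_2 c A$; $f_1 k_1 \leq f_1 k_2$ is dual. In (b) both sides reduce to $k_1 i_* A \cap c(i_j k_* i_* A)$ with $j=1$ on the left and $j=2$ on the right, so the inequality is $i_2 \leq i_1$ applied to $k_* i_* A$; the second clause is dual. For (c) and (d) the observation $k_1 f_1 A = f_1 A$ (since $f_1 A$ is $\tau_1$-closed) aligns the interior halves of $f_1 f_1$ and $f_1 k_2 f_1$ to the common value $c i_1 f_1 A$, reducing (c) to $f_1 A \subseteq k_2 f_1 A$; then, using $k_1 f_2 = f_2$ and $i_1 f_2 = i_1 f_*$, statement (d) reduces to $k_2 f_1 A \subseteq f_2 A$, which follows from $f_1 A \subseteq f_2 A$ together with the $\tau_2$-closedness of $f_2 A$. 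For (e) the collapse $i_1 k_* i_* k_* = i_1 k_*$ equates the interior halves of $f_1 k_2$ and $f_1 k_2 i_* k_*$, reducing the statement to $k_2 i_2 k_2 A \subseteq k_2 A$ (immediate from $i_2 B \subseteq B$); the dual uses $i_2 A \subseteq i_2 k_2 i_2 A$, which holds because $i_2 A$ is a $\tau_2$-open subset of $k_2 i_2 A$.

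The one genuinely two-pronged case, and the main obstacle, is (f). Writing $f_1 k_2 A = k_2 A \cap c i_1 k_* A$ and $f_1 f_2 A = f_2 A \cap c i_1 f_1 A$, neither half cancels, so $f_1 k_2 \leq f_1 f_2$ must be verified against each factor separately: (i) $k_2 A \cap c i_1 k_* A \subseteq f_2 A$, and (ii) $k_2 A \cap c i_1 k_* A \subseteq c i_1 f_1 A$. After a DeMorgan rearrangement, (i) becomes $i_2 A \subseteq i_1 k_* A$, which holds because $i_2 A$ is $\tau_2$-open hence $\tau_1$-open (using $\tau_2 \subseteq \tau_1$) and is contained in $k_1 A$. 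Claim (ii) reduces to $i_1 f_1 A \subseteq i_1 k_* A$, which is $\tau_1$-monotonicity applied to $f_1 A \subseteq k_1 A$. Apart from this single case where two inclusions must be checked in tandem, I do not foresee any conceptual difficulty; the real bookkeeping challenge throughout is resolving the star notation carefully in longer words such as $k_2 i_* k_*$, where repeated use of Banakh pins both indices down to $2$ (giving $k_2 i_2 k_2$) before any Kuratowski-style cancellation becomes visible.
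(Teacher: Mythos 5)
Your proposal is correct, and for parts (a), (b), (c), and (e) it is essentially the paper's own argument: expand $f_1=k_1\wedge ci_1$, collapse each half with the Banakh/Kuratowski/IF identities until the two sides share a factor, and reduce to a single one-line comparison such as $k_1c\leq k_2c$ or $k_2i_*k_*\leq k_2$. The two places where you genuinely diverge are worth noting. For (d), the paper's printed proof is a verbatim repetition of the proof of (c) (it re-derives $f_1f_1\leq f_1k_2f_1$ and never addresses $f_1k_2f_1\leq f_1f_2$); your reduction --- both sides share the factor $ci_1f_1$ (via $i_1k_*f_1=i_1f_1$ on the left and $i_1f_2=i_1f_*=i_1f_1$ on the right), so the claim becomes $k_2f_1\leq f_2$, which follows from $f_1\leq f_2$ and the $\tau_2$-closedness of $f_2A$ --- is a correct and clean repair of that gap. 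For (f), the paper instead writes $f_1k_2=k_2\wedge(k_2c\wedge k_1i_*c)$ and bounds it above by a join, then uses distributivity of the lattice and of $k_1$ over unions to reassemble $f_2\wedge ci_1f_2=f_1f_2$; your route of checking the two meet-factors $f_2A$ and $ci_1f_1A$ separately (reducing to $i_2\leq i_1k_*$ and $i_1f_1\leq i_1k_*$) proves the same inequality more directly and with less machinery. Both of your deviations are sound; if anything, your version of (d) is the one the paper should have printed.
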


\begin{proof}  For (a), we have $f_1i_1=k_1i_*\wedge ci_1=k_1i_*\wedge k_1c\leq k_1i_*\wedge k_2c= k_1i_*\wedge ci_2=f_1i_2$, and the second statement follows in a dual way, because we can multiply the first inequality on the right by $c$, and get $f_1k_1=f_1i_1c\leq f_1i_2c=f_1k_2$.


For (b), we have

\begin{align*}
f_1k_1i_* &= k_1k_1i_* \wedge k_1ck_1i_* = k_1i_* \wedge k_1i_*k_*c.\\
f_1i_2k_*i_* &= k_1i_*k_*i_* \wedge k_1ci_2k_*i_* = k_1i_* \wedge k_2i_*k_*c.
\end{align*}
\vspace{.3cm}

The second statement follows dually.

For (c), we compute $f_1f_1=f_1\wedge ci_1f_1$ and $f_1k_2f_1=k_2f_1\wedge ci_1k_*f_1=k_2f_1\wedge ci_1f_1$, so the inequality $f_1f_1\leq f_1k_2f_1$ follows from $f_1\leq k_2f_1$.

For (d), compute $f_1f_1=f_1\wedge ci_1f_1$ and $f_1k_2f_1=k_2f_1\wedge ci_1k_*f_1=k_2f_1\wedge ci_1f_1$, so the inequality $f_1f_1\leq f_1k_2f_1$ follows from $f_1\leq k_2f_1$.

For (e), compute $f_1k_2i_*k_*=k_1k_2i_*k_*\wedge k_1ck_2i_*k_*=k_2i_*k_*\wedge k_1i_*c\leq k_2\wedge k_1ck_2=f_1k_2$.

Lastly, for (f), note that $k_1i_*\leq k_1\leq k_2$. Hence
$f_1k_2=k_1k_2\wedge k_1ck_2
=k_2\wedge k_1i_*c
=k_2\wedge(k_2c\wedge k_1i_*c)
\leq\left[(k_2\wedge k_2c)\wedge k_1i_*c\right]\vee\left[(k_2\wedge k_2c)\wedge k_1i_*\right]
=(k_2\wedge k_2c)\wedge (k_1i_*c\vee k_1i_*)
=f_2\wedge (k_1ck_*\vee k_1ck_*c)=f_2\wedge k_1(ck_2\vee ck_2c)
=k_1f_2\wedge k_1c(k_2\wedge k_2c)
=f_1f_2$.
\end{proof}

Using the inequalities in the proposition, together with the facts that closure and interior are left order-preserving, and all operators are right order-preserving, we obtain the diagram of the partially ordered set $\mathcal{KF}_2^0$ depicted in Figure 2.

\afterpage{\clearpage}

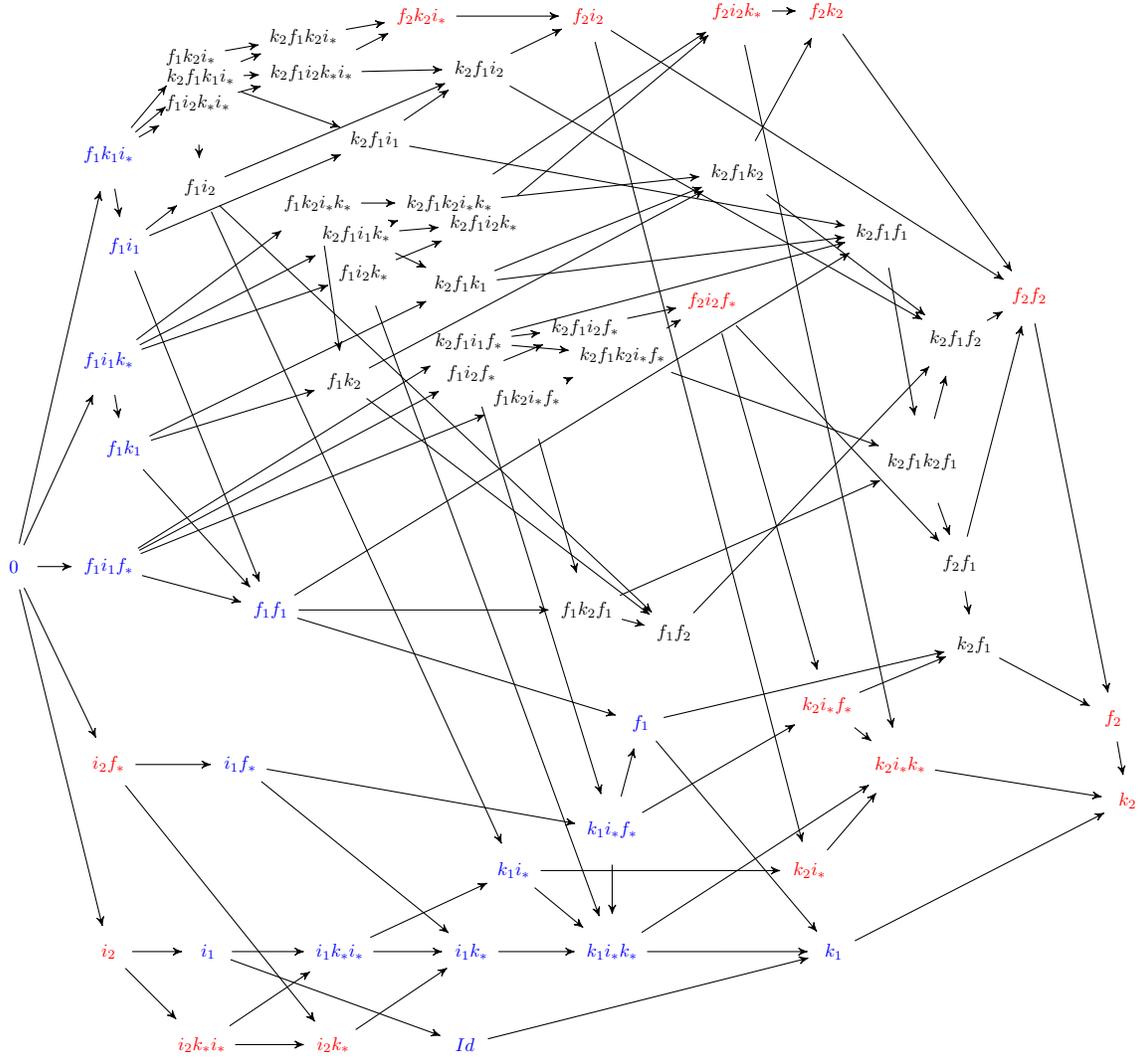
\begin{figure}[p] \label{figure_KF2}
\begin{tikzpicture}[->,>=stealth',shorten >=1pt,auto,node distance=2.5cm,
        scale = 0.70,transform shape, state without output/.append style={draw=none}]

  \node[state] (0) [] {\textcolor{blue}{$0$}};
	
  \node[state] (f_1i_1f_*) [position=0:{0.7cm} from 0] {\textcolor{blue}{$f_1i_1f_*$}};
  \node[state] (f_1i_1k_*) [position=90:{2.6cm} from f_1i_1f_*] {\textcolor{blue}{$f_1i_1k_*$}};
	\node[state] (f_1k_1i_*) [position=90:{2.6cm} from f_1i_1k_*] {\textcolor{blue}{$f_1k_1i_*$}};
  \node[state] (i_2f_*) [position=-90:{2.6cm} from f_1i_1f_*] {\textcolor{red}{$i_2f_*$}};
  \node[state] (i_2) [position=-90:{2.6cm} from i_2f_*] {\textcolor{red}{$i_2$}};
	
	\node[state] (f_1i_2k_*i_*) [position=30:{0.5cm} from f_1k_1i_*] {$f_1i_2k_*i_*$};
  \node[state] (k_2f_1k_1i_*) [position=40:{0.8cm} from f_1k_1i_*] {$k_2f_1k_1i_*$};
  \node[state] (f_1k_2i_*) [position=50:{1.1cm} from f_1k_1i_*] {$f_1k_2i_*$};  
	\node[state] (k_2f_1k_2i_*) [position=10:{7mm} from f_1k_2i_*] {$k_2f_1k_2i_*$};
  \node[state] (k_2f_1i_2k_*i_*) [position=15:{5mm} from f_1i_2k_*i_*] {$k_2f_1i_2k_*i_*$};
  \node[state] (f_2k_*i_*) [position=10:{8mm} from k_2f_1k_2i_*] {\textcolor{red}{$f_2k_2i_*$}};
  \node[state] (f_1i_1) [position=-80:{6mm} from f_1k_1i_*] {\textcolor{blue}{$f_1i_1$}};
  \node[state] (f_1i_2) [position=38:{0.8cm} from f_1i_1] {$f_1i_2$};
  \node[state] (k_2f_1i_1) [position=23:{4cm} from f_1i_1] {$k_2f_1i_1$};
  \node[state] (k_2f_1i_2) [position=23:{4.6cm} from f_1i_2] {$k_2f_1i_2$};
  \node[state] (f_2i_*) [position=0:{2cm} from f_2k_*i_*] {\textcolor{red}{$f_2i_2$}};
	
  \node[state] (f_1i_2k_*) [position=19:{3.8cm} from f_1i_1k_*] {$f_1i_2k_*$};
  \node[state] (k_2f_1i_1k_*) [position=27:{3.8cm} from f_1i_1k_*] {$k_2f_1i_1k_*$};
  \node[state] (f_1k_2i_*k_*) [position=37:{3.5cm} from f_1i_1k_*] {$f_1k_2i_*k_*$};
  \node[state] (k_2f_1i_2k_*) [position=5:{0.8cm} from k_2f_1i_1k_*] {$k_2f_1i_2k_*$};
  \node[state] (k_2f_1k_2i_*k_*) [position=0:{0.7cm} from f_1k_2i_*k_*] {$k_2f_1k_2i_*k_*$};
  \node[state] (f_2i_*k_*) [position=40:{4.8cm} from k_2f_1i_2k_*] {\textcolor{red}{$f_2i_2k_*$}};
  \node[state] (f_1k_1) [position=-80:{5mm} from f_1i_1k_*] {\textcolor{blue}{$f_1k_1$}};
  \node[state] (f_1k_2) [position=17:{3.3cm} from f_1k_1] {$f_1k_2$};
  \node[state] (k_2f_1k_1) [position=26:{5.9cm} from f_1k_1] {$k_2f_1k_1$};
	\node[state] (k_2f_1k_2) [position=22:{4.3cm} from k_2f_1k_1] {$k_2f_1k_2$};
  \node[state] (f_2k_*) [position=0:{5mm} from f_2i_*k_*] {\textcolor{red}{$f_2k_2$}};
	
  \node[state] (f_1i_2f_*) [position=28:{6.5cm} from f_1i_1f_*] {$f_1i_2f_*$};
  \node[state] (k_2f_1i_1f_*) [position=32:{6.6cm} from f_1i_1f_*] {$k_2f_1i_1f_*$};
  \node[state] (f_1k_2i_*f_*) [position=22:{7.1cm} from f_1i_1f_*] {$f_1k_2i_*f_*$};
  \node[state] (k_2f_1i_2f_*) [position=7:{0.6cm} from k_2f_1i_1f_*] {$k_2f_1i_2f_*$};
  \node[state] (k_2f_1k_2i_*f_*) [position=24:{0.2cm} from f_1k_2i_*f_*] {$k_2f_1k_2i_*f_*$};
  \node[state] (f_2i_*f_*) [position=12:{1cm} from k_2f_1i_2f_*] {\textcolor{red}{$f_2i_2f_*$}};
  \node[state] (f_1f_1) [position=-15:{2cm} from f_1i_1f_*] {\textcolor{blue}{$f_1f_1$}};
  \node[state] (f_1k_2f_1) [position=0:{4.8cm} from f_1f_1] {$f_1k_2f_1$};
  \node[state] (k_2f_1f_1) [position=7:{6.7cm} from k_2f_1k_1] {$k_2f_1f_1$};
  \node[state] (k_2f_1k_2f_1) [position=-80:{2.9cm} from k_2f_1f_1] {$k_2f_1k_2f_1$};
	
  \node[state] (f_1f_2) [position=-15:{0.5cm} from f_1k_2f_1] {$f_1f_2$};
  \node[state] (k_2f_1f_2) [position=75:{0.9cm} from k_2f_1k_2f_1] {$k_2f_1f_2$};
  \node[state] (f_2f_2) [position=30:{0.4cm} from k_2f_1f_2] {\textcolor{red}{$f_2f_2$}};
	
	\node[state] (f_2f_1) [position=-70:{7mm} from k_2f_1k_2f_1] {$f_2f_1$};

  \node[state] (i_1) [position=0:{1cm} from i_2] {\textcolor{blue}{$i_1$}};
  \node[state] (i_2k_*i_*) [below right of=i_2] {\textcolor{red}{$i_2k_*i_*$}};
  \node[state] (i_1k_*i_*) [right of=i_1] {\textcolor{blue}{$i_1k_*i_*$}};
  \node[state] (i_1f_*) [right of=i_2f_*] {\textcolor{blue}{$i_1f_*$}};
  \node[state] (k_1i_*f_*) [position=-10:{6cm} from i_1f_*] {\textcolor{blue}{$k_1i_*f_*$}};
  \node[state] (i_1k_*) [right of=i_1k_*i_*] {\textcolor{blue}{$i_1k_*$}};
  \node[state] (i_2k_*) [right of=i_2k_*i_*] {\textcolor{red}{$i_2k_*$}};
  \node[state] (ID) [right of=i_2k_*] {\textcolor{blue}{$Id$}};
	
  \node[state] (f_1) [position=75:{1cm} from k_1i_*f_*] {\textcolor{blue}{$f_1$}};
	\node[state] (k_2f_1) [position=-80:{0.5cm} from f_2f_1] {$k_2f_1$};
  \node[state] (f_2) [position=-28:{2cm} from k_2f_1] {\textcolor{red}{$f_2$}};
  
  \node[state] (k_2i_*f_*) [position=30:{3.4cm} from k_1i_*f_*] {\textcolor{red}{$k_2i_*f_*$}};

  \node[state] (k_1i_*) [position=25:{2.5cm} from i_1k_*i_*] {\textcolor{blue}{$k_1i_*$}};
  \node[state] (k_2i_*) [position=0:{4.6cm} from k_1i_*] {\textcolor{red}{$k_2i_*$}};
  \node[state] (k_1i_*k_*) [position=0:{1.5cm} from i_1k_*] {\textcolor{blue}{$k_1i_*k_*$}};
 
  \node[state] (k_2i_*k_*) [position=33:{5.2cm} from k_1i_*k_*] {\textcolor{red}{$k_2i_*k_*$}};

  \node[state] (k_1) [position=0:{3.1cm} from k_1i_*k_*] {\textcolor{blue}{$k_1$}};
  \node[state] (k_2) [position=-80:{0.7cm} from f_2] {\textcolor{red}{$k_2$}};

  \path (0) edge              node {} (i_2)
        (0) edge              node {} (i_2f_*)
        (0) edge              node {} (f_1i_1f_*)
        (0) edge              node {} (f_1i_1k_*)
        (0) edge              node {} (f_1k_1i_*)
        (f_1k_2i_*f_*) edge              node {} (k_2f_1k_2i_*f_*)
        (f_1i_1f_*) edge              node {} (k_2f_1i_1f_*)
        (f_1i_1f_*) edge              node {} (f_1i_2f_*)
				(f_1i_1f_*) edge							node {} (f_1k_2i_*f_*)
        (i_1f_*) edge              node {} (k_1i_*f_*)
        (i_2) edge              node {} (i_1)
        (i_2) edge              node {} (i_2k_*i_*)
        (i_1) edge              node {} (i_1k_*i_*)
        (i_2k_*i_*) edge              node {} (i_1k_*i_*)
        (i_2f_*) edge              node {} (i_1f_*)
        (i_1f_*) edge              node {} (i_1k_*)
        (i_1k_*i_*) edge              node {} (i_1k_*)
        (i_2k_*i_*) edge              node {} (i_2k_*)
        (i_2k_*) edge              node {} (i_1k_*)
        (i_1) edge              node {} (ID)
        (f_1k_1i_*) edge              node {} (f_1i_2k_*i_*)
        (f_1i_2k_*i_*) edge              node {} (k_2f_1i_2k_*i_*)
        (f_1k_1i_*) edge              node {} (f_1k_2i_*)
        (f_1k_1i_*) edge              node {} (k_2f_1k_1i_*)
        (f_1k_2i_*k_*) edge              node {} (k_2f_1k_2i_*k_*)
        (f_1i_1k_*) edge              node {} (k_2f_1i_1k_*)
        (f_1i_1k_*) edge              node {} (f_1i_2k_*)
				(f_1i_1k_*) edge							node {} (f_1k_2i_*k_*)
        (k_2f_1i_1k_*) edge              node {} (k_2f_1i_2k_*)
        (f_1i_2k_*) edge              node {} (k_2f_1i_2k_*)
        (k_2f_1i_2k_*) edge              node {} (f_2i_*k_*)
        (k_2f_1k_2i_*k_*) edge              node {} (f_2i_*k_*)
        (k_2f_1i_2k_*i_*) edge              node {} (f_2k_*i_*)
        (f_1k_2i_*) edge              node {} (k_2f_1k_2i_*)
        (k_2f_1k_2i_*) edge              node {} (f_2k_*i_*)
        (k_2f_1k_1i_*) edge              node {} (k_2f_1k_2i_*)
        (k_2f_1i_1f_*) edge              node {} (k_2f_1i_2f_*)
        (f_1i_2f_*) edge              node {} (k_2f_1i_2f_*)
        (k_2f_1k_2i_*f_*) edge              node {} (f_2i_*f_*)
        (k_2f_1i_2f_*) edge              node {} (f_2i_*f_*)
        (f_1i_1k_*) edge              node {} (f_1k_1)
				
				(f_2i_*k_*) edge 	node {} (k_2i_*k_*)
				
        (f_1k_1) edge              node {} (f_1k_2)
        (f_1k_2) edge              node {} (k_2f_1k_2)
        (f_1k_1) edge              node {} (k_2f_1k_1)
        (k_2f_1k_1) edge              node {} (k_2f_1k_2)
        (k_2f_1k_2) edge              node {} (f_2k_*)
        (f_2i_*k_*) edge              node {} (f_2k_*)
        (f_1k_1i_*) edge              node {} (f_1i_1)
        (f_1i_1) edge              node {} (f_1i_2)
        (f_1i_1) edge              node {} (k_2f_1i_1)
        (k_2f_1i_1) edge              node {} (k_2f_1i_2)
        (f_1i_2) edge              node {} (k_2f_1i_2)
        (k_2f_1i_2) edge              node {} (f_2i_*)
        (f_2k_*i_*) edge              node {} (f_2i_*)
        (i_1k_*i_*) edge              node {} (k_1i_*)
        (f_2i_*f_*) edge              node {} (k_2i_*f_*)
        (k_1i_*f_*) edge              node {} (k_2i_*f_*)
        (f_1i_1) edge              node {} (f_1f_1)
        (f_1k_1) edge              node {} (f_1f_1)
        (f_1i_1f_*) edge              node {} (f_1f_1)
        (k_1i_*) edge              node {} (k_1i_*k_*)
        (k_1i_*f_*) edge              node {} (k_1i_*k_*)
        (i_1k_*) edge              node {} (k_1i_*k_*)
        (k_1i_*) edge              node {} (k_2i_*)
        (k_2i_*) edge              node {} (k_2i_*k_*)
        (k_1i_*k_*) edge              node {} (k_2i_*k_*)
        (k_2i_*f_*) edge              node {} (k_2i_*k_*)
        (f_1f_1) edge              node {} (k_2f_1f_1)
        (f_1f_1) edge              node {} (f_1)
        (k_1i_*f_*) edge              node {} (f_1)
        (f_1f_1) edge              node {} (f_1k_2f_1)
        (f_1f_2) edge              node {} (k_2f_1f_2)
        (f_1k_2f_1) edge              node {} (k_2f_1k_2f_1)
        (k_2f_1k_2f_1) edge              node {} (f_2f_1)
        (f_2f_1) edge              node {} (f_2f_2)
        (f_2k_*) edge              node {} (f_2f_2)
        (k_2f_1f_2) edge              node {} (f_2f_2)
        (f_2i_*) edge              node {} (f_2f_2)
        (f_2i_*f_*) edge              node {} (f_2f_1)
        (f_2f_1) edge              node {} (k_2f_1)
        (f_1) edge              node {} (k_2f_1)
        (f_2f_2) edge              node {} (f_2)
        (k_2f_1) edge              node {} (f_2)
        (f_1) edge              node {} (k_1)
        (k_1i_*k_*) edge              node {} (k_1)
        (ID) edge              node {} (k_1)
        (k_1) edge              node {} (k_2)
        (k_2i_*k_*) edge              node {} (k_2)
        (f_2) edge              node {} (k_2)
        (f_1i_2k_*) edge              node {} (k_1i_*k_*)
				(k_2f_1k_2) edge					node {} (k_2f_1f_2)
				(k_2f_1i_2) edge					node {} (k_2f_1f_2)
				(f_1k_2) edge							node {} (f_1f_2)
				(f_1i_2) edge							node {} (f_1f_2)
				
				
				(k_2f_1k_1) edge node {} (k_2f_1f_1)
				(f_1k_2f_1) edge node {} (f_1f_2)
				(k_2f_1f_1) edge node {} (k_2f_1k_2f_1)
				(f_1i_2) edge node {} (k_1i_*)
				(f_2i_*) edge node {} (k_2i_*)
				(i_2f_*) edge node {} (i_2k_*)
				(k_2f_1i_1) edge node {} (k_2f_1f_1)
				(f_1i_2f_*) edge node {} (k_1i_*f_*)
				(k_2i_*f_*) edge node {} (k_2f_1)
				
				(f_1k_2i_*f_*) edge node {} (f_1k_2f_1)
				(f_1i_2k_*i_*) edge node {} (f_1i_2)
				(k_2f_1k_2f_1) edge node {} (k_2f_1f_2)
				(f_1k_2i_*k_*) edge node {} (f_1k_2)
				(k_2f_1i_1k_*) edge node {} (k_2f_1k_1)
				(k_2f_1i_1k_*) edge node {} (k_2f_1k_2i_*k_*)
				(k_2f_1k_2i_*f_*) edge node {} (k_2f_1k_2f_1)
				(k_2f_1i_2k_*i_*) edge node {} (k_2f_1i_2)
				(k_2f_1k_2i_*k_*) edge node {} (k_2f_1k_2)
				
				(k_2f_1k_1i_*) edge node {} (k_2f_1i_1)
				(k_2f_1i_1f_*) edge node {} (k_2f_1f_1)
				(k_2f_1k_1i_*) edge node {} (k_2f_1i_2k_*i_*)
				(k_2f_1i_1f_*) edge node {} (k_2f_1k_2i_*f_*);
\end{tikzpicture}

\caption{The partial ordering on $\mathcal{KF}_2^0$.  The blue operators are operators that can be built using exclusively the $\tau_1$ topology. The red operators are operators built using the topology $\tau_2$ that cannot also be built using $\tau_1$. The black operators are those built using a combination of both topologies.}
\end{figure}

To show that no further inequalities hold in general, we define a partition
$P=\{P_0,\ldots,P_{12}\}$ of $\mathbb{R}^+\!$ such that for each inequality
$o_1\leq o_2$ ($o_1,o_2\in\mathcal{KF}_2^0$)
not implied by Figure 2, there exist integers
$0\leq\alpha_1<\dots<\alpha_n\leq12$ ($1\leq n\leq12$) satisfying
$o_1(P_{\alpha_1}\cup\dots\cup P_{\alpha_n})\not\subseteq
o_2(P_{\alpha_1}\cup\dots\cup P_{\alpha_n})$ in
$(\mathbb{R}^+\!,\tau_1,\tau_2)$ where $\tau_1=\tau_s$ is the Sorgenfrey topology
and $\tau_2=\tau_u$ is the usual Euclidean topology.

The partition $\{\pi_1,\ldots,\pi_8\}$
of $(0,1]$ is defined as follows:

\newcommand{\hs}{\vrule width10pt height0pt depth0pt}

\begin{equation*}
\begin{aligned}
\pi_1&=\textstyle\bigcup_{n=1}^\infty\left\{\frac{1}{3^{2n}}\right\}\\
\pi_2&=\textstyle\bigcup_{n=1}^\infty\left(\frac{1}{3^{2n}},\frac{2}{3^{2n}}\right)\\\\
\end{aligned}\hs\begin{aligned}
\pi_3&=\textstyle\bigcup_{n=1}^\infty\left\{\frac{2}{3^{2n}}\right\}\\
\pi_4&=\textstyle\bigcup_{n=1}^\infty\left(\frac{2}{3^{2n}},\frac{1}{3^{2n-1}}\right)\\\\
\end{aligned}\hs\begin{aligned}
\pi_5&=\textstyle\bigcup_{n=1}^\infty\left\{\frac{1}{3^{2n-1}}\right\}\\
\pi_6&=\textstyle\bigcup_{n=1}^\infty\left(\frac{1}{3^{2n-1}},\frac{1}{3^{2n-2}}\right)\cap\mathbb{Q}\\
\pi_7&=\textstyle\bigcup_{n=1}^\infty\left(\frac{1}{3^{2n-1}},\frac{1}{3^{2n-2}}\right)\setminus\mathbb{Q}\\
\end{aligned}\hs\begin{aligned}
\pi_{8}&=\{1\}.\\\\\\
\end{aligned}
\end{equation*}

\noindent
For $1\leq j\leq8$, let $P_j=\textstyle\bigcup_{n=0}^\infty\left(1-\frac{1}{2^n}+
\frac{1}{2^{n+2}}\pi_j\right)$. Thus

\[
\textstyle P_1\cup\cdots\cup P_8=(0,\frac{1}{4}]\cup(\frac{1}{2},\frac{5}{8}]\cup(\frac{3}{4},
\frac{13}{16}]\cup\cdots.
\]

\noindent
To complete the definition of $P$, set

\begin{equation*}
\begin{aligned}
P_0&=\left\{\textstyle1-\frac{1}{2^n}:n=1,2,\ldots\right\}\\
P_9&=\textstyle(\frac{1}{4},\frac{1}{2})\cup(\frac{5}{8},\frac{3}{4})\cup(\frac{13}{16},
\frac{7}{8})\cup\cdots\\
\end{aligned}\hs\begin{aligned}
P_{10}&=\{1\}\\\\
\end{aligned}\hs\hs\begin{aligned}
P_{11}&=(1,\infty)\cap\mathbb{Q}\\
P_{12}&=(1,\infty)\setminus\mathbb{Q}\,.\\
\end{aligned}
\end{equation*}

\noindent
Then each of the following equations holds in $(\mathbb{R}^+\!,\tau_1,\tau_2)$:
\begin{equation*}
\begin{aligned}
k_1P_0&=P_0\\
k_1P_1&=P_0\cup P_1\\
k_1P_2&=P_0\cup P_1\cup P_2\\
k_1P_3&=P_0\cup P_3\\
k_1P_4&=P_0\cup P_3\cup P_4\\
k_1P_5&=P_0\cup P_5\\
k_1P_6&=P_0\cup P_5\cup P_6\cup P_7\\
k_1P_7&=P_0\cup P_5\cup P_6\cup P_7\\
k_1P_8&=P_8\\
k_1P_9&=P_8\cup P_9\\
k_1P_{10}&=P_{10}\\
k_1P_{11}&=P_{10}\cup P_{11}\cup P_{12}\\
k_1P_{12}&=P_{10}\cup P_{11}\cup P_{12}\end{aligned}\hs\begin{aligned}
k_2P_0&=P_0\cup P_{10}\\
k_2P_1&=P_0\cup P_1\cup P_{10}\\
k_2P_2&=P_0\cup P_1\cup P_2\cup P_3\cup P_{10}\\
k_2P_3&=P_0\cup P_3\cup P_{10}\\
k_2P_4&=P_0\cup P_3\cup P_4\cup P_5\cup P_{10}\\
k_2P_5&=P_0\cup P_5\cup P_{10}\\
k_2P_6&=P_0\cup P_1\cup P_5\cup P_6\cup P_7\cup P_8\cup P_{10}\\
k_2P_7&=P_0\cup P_1\cup P_5\cup P_6\cup P_7\cup P_8\cup P_{10}\\
k_2P_8&=P_8\cup P_{10}\\
k_2P_9&=P_0\cup P_8\cup P_9\cup P_{10}\\
k_2P_{10}&=P_{10}\\
k_2P_{11}&=P_{10}\cup P_{11}\cup P_{12}\\
k_2P_{12}&=P_{10}\cup P_{11}\cup P_{12}\,.\\
\end{aligned}
\end{equation*}

\noindent
Using these equations, all inclusions not implied by Figure 2 may be eliminated computationally.  Bowron has written the following C program and Python script which verify the eliminations:\\

\noindent
\url{https://github.com/mathematrucker/polytopological-spaces/blob/master/figure_2.c}

\noindent
\url{https://github.com/mathematrucker/polytopological-spaces/blob/master/figure_2.py}\\

Following Lemma \ref{lemma_disjoint_union}, we may take the disjoint union of all possible sets of the form $P_{\alpha_1}\cup\dots\cup P_{\alpha_n}$ to obtain an initial set $A$ with the property that if $o_1,o_2\in\mathcal{KF}_2^0$ and $o_1\not\leq o_2$, then $o_1A\not\subseteq o_2A$.  Consequently, $o_1\leq o_2$ if and only if $o_1A\subseteq o_2A$, for all $o_1,o_2\in\mathcal{KF}_2^0$.
\end{example}

\section{The General Case}

We are ready to solve the closure-complement-frontier problem in the general setting of a saturated $n$-topological space where $n$ is arbitrary.  The surprising fact which underlies our computation is that every reduced word in $\mathcal{KF}_n^0$ has length $\leq 5$, and in fact has the same form as one of the reduced words which we already computed in Section 3 for $\mathcal{KF}_2^0$.

In order to prove this observation we define the following subsets of $\mathcal{KF}_n^0$:

\begin{center} $K=\{k_j:1\leq j\leq n\}$\\ $I=\{i_j:1\leq j\leq n\}$\\  $F=\{f_j:1\leq j\leq n\}$
\end{center}
\vspace{.3cm}

We also allow the formation of product sets in $\mathcal{KF}_n^0$ in the usual way, so we may write, for example, $KFI=\{kfi:k\in K, i\in I, f\in F\}$.  So if $n=2$, we could explicitly write

\begin{center} $KFI=\{f_1i_1, f_1i_2, k_2f_1i_1, k_2f_1i_2, f_2i_*\}$.
\end{center}
\vspace{.3cm}

We will now adopt a notational convention which will not lead to ambiguity in the context of this paper, and which will help us clearly delineate word types in $\mathcal{KF}_n^0$.  Suppose $E$ is a set which is the $n$-times product of the sets $K$, $I$, and $F$ (in any order).  Then we denote by $(E)_r$ the set of all \textit{reduced} words $\omega\in E$, i.e. those which do not admit any representation as a word of length $<n$.  So, under this convention, if $n=2$ we would have

\begin{center} $(KFI)_r=\{k_2f_1i_1, k_2f_1i_2\}$.
\end{center}
\vspace{.3cm}

We are now ready to prove our main Theorem \ref{thm_main}, which is a consequence of the more detailed theorem below.

\begin{thm}  Let $(X,\tau_1,...,\tau_n)$ be a saturated $n$-topological space.  Then $\mathcal{KF}_n^0$ is contained in the union of the sets in the left-hand column of the table below.  The number of distinct elements in each such set is at most as listed in the right-hand column.\\

\begin{minipage}{0.5\textwidth}

\renewcommand{\arraystretch}{1.25}
\begin{tabular}{|c|r|}
\hline
Word-Type & Number of Words\\
\hline
$\{\Id\}$ & $1$\\
$IFK=\{0\}$ & $1$\\
$I$ & $n$\\
$K$ & $n$\\
$IK$ & $n$\\
$KI$ & $n$\\
$IKI$ & $n$\\
$KIK$ & $n$\\
$F$ & $n$\\
$IF$ & $n$\\
$FF$ & $n^2$\\
$FI$ & $n+\binom{n}{2}$\\
$FK$ & $n + \binom{n}{2}$\\
$FIF$ & $n+\binom{n}{2}$\\
$KIF$ & $n$\\
$FIK$ & $n+\binom{n}{2}$\\\hline
\end{tabular}

\end{minipage} \begin{minipage}{0.5\textwidth}

\renewcommand{\arraystretch}{1.25}
\begin{tabular}{|c|r|}
\hline
Word-Type & Number of Words\\
\hline
$FKI$ & $n+\binom{n}{2}$\\
$(KF)_r$ & $\binom{n}{2}$\\
$(KFK)_r$ & $2\binom{n+1}{3}$\\
$(KFI)_r$ & $2\binom{n+1}{3}$\\
$(KFF)_r$ & $ \binom{n}{2}\cdot n$\\
$(FKF)_r$ & $\binom{n}{2}+2\binom{n}{3}$\\
$(FIKI)_r$ & $\binom{n}{2}$\\
$(FKIK)_r$ & $\binom{n}{2}$\\
$(FKIF)_r$ & $\binom{n}{2}$\\
$(KFIK)_r$ & $2\binom{n+1}{3}$\\
$(KFKI)_r$ & $2\binom{n+1}{3}$\\
$(KFIF)_r$ & $2\binom{n+1}{3}$\\
$(KFKF)_r$ & $\binom{n}{2}+5\binom{n}{3}+5\binom{n}{4}$\\
$(KFIKI)_r$ & $\binom{n}{2}+2\binom{n}{3}$\\
$(KFKIK)_r$ & $\binom{n}{2}+2\binom{n}{3}$\\
$(KFKIF)_r$ & $\binom{n}{2}+2\binom{n}{3}$\\
\hline
\end{tabular}

\end{minipage}
\vspace{.3cm}

Consequently, the number of elements of $\mathcal{KF}_n^0$ is at most

\begin{align*}
p(n) &= 5\binom{n}{4}+10\binom{n+1}{3}+13\binom{n}{3}+(n+14)\binom{n}{2}+n^2+14n+2\\
&= \frac{5}{24}n^4+\frac{37}{12}n^3+\frac{79}{24}n^2+\frac{101}{12}n+2
\end{align*}
\vspace{.3cm}

\noindent and the number of elements of $\mathcal{KF}_n$ is at most $2p(n)$.
\end{thm}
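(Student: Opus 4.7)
The plan is to define $\mathcal{L}$ to be the union of the operator-sets in the left-hand column of the table, and to prove two things: (i) $\mathcal{L}$ is closed under left-multiplication by each generator of $\mathcal{KF}_n^0$, and (ii) the number of distinct operators in each individual word-type is at most as given in the right-hand column. Combined with $\Id\in\mathcal{L}$, (i) gives by induction on word length that $\mathcal{KF}_n^0\subseteq\mathcal{L}$; then (ii) bounds $\#\mathcal{KF}_n^0$ by the sum of the thirty-one counts, which is to be simplified to the claimed polynomial $p(n)$.

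For (i), I would proceed by case analysis organized by the leftmost letter of the target word $\omega\in\mathcal{L}$ and by the generator $g\in K\cup I\cup F$ being prepended. For each such pair $(g,\omega)$, one rewrites $g\omega$ using the identities of Section 2 and verifies that the result again lies in $\mathcal{L}$. The workhorses are Kuratowski's idempotencies, Lemma \ref{lemma_banakh} (the saturation identities $k_xi_y=k_xi_*$ and $i_xk_y=i_xk_*$), the FK, FI, FKF, FIKI/FKIK/FKIF, IFK/IFI, FFK/FFI/FFF, and FKFK/FKFI Lemmas. For instance, to see that prepending $f_x$ to $\omega\in(KF)_r$ stays in $\mathcal{L}$, write $\omega=k_yf_z$ with $y>z$ and compute $f_xk_yf_z$; Lemma \ref{lemma_fkf} gives $f_xk_yf_z=f_xf_z\in FF\subseteq\mathcal{L}$ when $y\leq\max(x,z)$, and otherwise the word is already in $(FKF)_r\subseteq\mathcal{L}$. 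The length-5 rows of the table represent the ``longest surviving'' reduced words; attempting to extend any of them by another generator immediately triggers one of the FKFK/FKFI or IFK/IFI collapses, preventing the growth of new word-types.

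For (ii), each count is a combinatorial bookkeeping step. The singleton rows ($\{\Id\}$, $IFK=\{0\}$) contribute $1$ each; single-topology rows ($I$, $K$, $F$, $IK$, $KI$, $IKI$, $KIK$, $IF$, $KIF$) contribute $n$ each; and the unconstrained product $FF$ contributes $n^2$. Irreducibility imposes strict inequality constraints: for $(KF)_r=\{k_xf_y:x>y\}$ Lemma \ref{lemma_basics_unsaturated}(3) forces $x>y$, giving $\binom{n}{2}$. Similar inequalities from the FK, FI, FKF, and FKFK Lemmas account for the $\binom{n}{2}$ and $\binom{n+1}{3}$ terms; in particular, for $(KFK)_r$ the FK Lemma produces $k_xf_yk_z=k_xf_yk_{\max(y,z)}$, so the reduced triples $(x,y,z)$ satisfy $x>y$ and $z\geq y$, yielding $2\binom{n+1}{3}=2\binom{n}{2}+2\binom{n}{3}$. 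The more elaborate count $\binom{n}{2}+5\binom{n}{3}+5\binom{n}{4}$ for $(KFKF)_r$ decomposes by how many of the four indices coincide. Summing the thirty-one entries and simplifying using $\binom{n+1}{3}=\binom{n}{3}+\binom{n}{2}$ and standard polynomial expansions produces exactly $p(n)=\frac{5}{24}n^4+\frac{37}{12}n^3+\frac{79}{24}n^2+\frac{101}{12}n+2$, and the bound on $\#\mathcal{KF}_n$ then follows from the Corollary at the end of Section 2.

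The main obstacle is item (i): with thirty-one word-types and three families of generators to prepend, there are on the order of a hundred subcases to verify. Each individual subcase is short — it collapses after one or two applications of the lemmas — but the bookkeeping is voluminous and prone to error. The subtlety is tracking exactly which indices are free versus forced by an irreducibility condition; the $(\cdot)_r$ notation is designed precisely to keep the closure verification in (i) consistent with the enumeration in (ii), and verifying that consistency is really where the work lies. The algebraic collapse of the binomial sum to the explicit quartic $p(n)$ is, by contrast, only a routine calculation.
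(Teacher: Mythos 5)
Your proposal follows essentially the same approach as the paper: the authors define the union $\mathcal{X}$ of the tabulated sets, verify (their goal \textbf{(A)}) that $k_xE, i_xE, f_xE\subseteq\mathcal{X}$ for every tabulated $E$ and every $x$ so that induction on word length gives $\mathcal{KF}_n^0\subseteq\mathcal{X}$, and separately establish (their goal \textbf{(B)}) each cardinality bound by exactly the index-coincidence case analysis you describe, including the $2\binom{n+1}{3}$ count for $(KFK)_r$ and the coincidence decomposition for $(KFKF)_r$. The only organizational difference is that the paper streamlines the case analysis by recording, at intermediate checkpoints, that all operators admitting representations of length $\leq 3$, $\leq 4$, and $\leq 5$ already lie in $\mathcal{X}$, which lets many of the prepending subcases be dispatched at once.
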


\begin{proof}  Let $\mathcal{X}$ be the union of all of the sets in the table above, so we want to prove $\mathcal{KF}_n^0\subseteq\mathcal{X}$.  For this, it suffices to check that \textbf{(A)} for each set $E$ listed in the table above, and for each $1\leq x\leq n$, we have $k_xE, i_xE, f_xE\subseteq\mathcal{X}$.  Our second goal \textbf{(B)} is to establish the listed upper bound for the cardinality of each set.

We can begin the verification by making these observations:

\begin{itemize}
		\item Every $0$- and $1$-letter word type in $\mathcal{KF}_n^0$ (i.e. the elements of $\{\Id\}$, $K$, $I$, and $F$) is accounted for in the table.
		\item There are $3^2=9$ possible $2$-letter word types.  By Lemma \ref{lemma_basics_unsaturated}, we have $II=I$ and $KK=K$, and the other seven possible types are accounted for on the table.  So all elements of $\mathcal{KF}_n^0$ which admit a word representation of length $\leq2$ are contained in $\mathcal{X}$.
		\item There are $3^3=27$ possible $3$-letter word types.  Ten of these reduce to $2$-letter words using $II=I$ and $KK=K$, which by the previous bullet point, are already accounted for in the table.  At most seventeen types remain, and among these, we know that $IFK=IFI=0$ by Lemma \ref{lemma_zero}, while $FFK=KFK$, $FFI=KFI$, and $FFF=KFF$ by Lemma \ref{lemma_ffk}.  Also $IKF=IF$ by Lemmas \ref{lemma_banakh} and \ref{lemma_basics_unsaturated}, and since $F\subseteq KF$, we have $IFF\subseteq IFKF\subseteq \{0\}F\subseteq\{0\}$.  This leaves eleven other possible $3$-letter word types, each of which is listed in the table.  Therefore, all elements of $\mathcal{KF}_n^0$ which admit a word representation of length $\leq 3$ are already contained in a subset listed in the table.
\end{itemize}
\vspace{.3cm}

By the last bullet point above, we see that whenever $E$ consists of $\leq2$-letter words, then indeed we have $k_xE, i_xE, f_xE\subseteq\mathcal{X}$ for each $1\leq x\leq n$, which establishes \textbf{(A)} for the sets $\{\Id\}$, $K$, $I$, $IK$, $KI$, $F$, $IF$ $FF$, $FI$, $FK$, and $(KF)_r$.  \textbf{(A)} is also immediate for the set $\{0\}$.

The cardinality bounds \textbf{(B)} are immediate for the sets $\{\Id\}$, $\{0\}$, $K$, $I$, $IK$, $KI$, $F$, and $FF$.  By Lemma \ref{lemma_if} the set $IF$ consists of words of the form $i_xf_*$ ($1\leq x\leq n$), of which there are $n$ many.  The set $(KF)_r$ consists of elements of the form $k_xf_y$ which do not reduce to $1$-letter representations; by Lemma \ref{lemma_basics_unsaturated}, it is necessary that $x>y$.  There are $\binom{n}{2}$ many pairs $(x,y)$ with $x>y$, so $\#(KF)_r\leq\binom{n}{2}$.  Lastly, by Lemma \ref{lemma_fk}, the set $FK$ consists of words of the form $f_xk_y$ where $1\leq x\leq y\leq n$; there are $n+\binom{n}{2}$ many such pairs $(x,y)$, and thus $FK$ consists of no more than $n+\binom{n}{2}$ elements.  A similar argument yields the same number for $FI$.

So to finish the proof, it remains only to check \textbf{(A)} and \textbf{(B)} for those sets $E$ which consist of words of length $\geq 3$.\\

\underline{The sets $IKI$ and $KIK$.}  By Lemma \ref{lemma_banakh}, every element of $IKI$ has the form $i_yk_*i_*$ for some $1\leq y\leq n$, and thus $\#IKI\leq n$, establishing \textbf{(B)}.  For any $1\leq x\leq n$, we have $k_xi_yk_*i_*=k_xi_*k_*i_*=k_xi_*$ by Lemma \ref{lemma_basics_unsaturated}, so $k_xIKI\subseteq KI\subseteq\mathcal{X}$.  Also $i_xi_yk_*i_*=i_{\max(x,y)}k_*i_*$ by Lemma \ref{lemma_basics_unsaturated}, so $i_xIKI\subseteq IKI\subseteq\mathcal{X}$.  The word $f_xi_yk_*i_*$ either reduces to a $\leq3$-letter word, in which case it is a member of $\mathcal{X}$ by our previous remarks; or it does not reduce, in which case $f_xi_yk_*i_*\in (FIKI)_r\subseteq\mathcal{X}$.  This establishes \textbf{(A)}, and the arguments are similar for $KIK$.\\

\underline{The set $FIF$.}  By Lemma \ref{lemma_if}, every element of $FIF$ has the form $f_yi_zf_*$, so $FIF\subseteq FIf_1$.  Therefore \textbf{(B)} $\#FIF\leq \#FI\leq n+\binom{n}{2}$.  For $1\leq x\leq n$, we have either $k_xf_yi_zf_*\in (KFIF)_r$ or $k_xf_yi_zf_*$ reduces to a shorter word; in either case we obtain $k_xf_yi_zf_*\in\mathcal{X}$ and hence $k_xFIF\subseteq\mathcal{X}$.  By Lemma \ref{lemma_zero} we see $i_xf_yi_zf_*=0f_*=0\in\mathcal{X}$, and by Lemma \ref{lemma_ffk} we see $f_xf_yi_zf_*=k_xf_yi_zf_*\in\mathcal{X}$, establishing \textbf{(A)}.\\

\underline{The set $KIF$.}  By Lemmas \ref{lemma_banakh} and \ref{lemma_if}, every element of $KIF$ has the form $k_yi_*f_*$, where $1\leq y\leq n$, so \textbf{(B)} holds.  For any $1\leq x\leq n$, $k_xk_yi_*f_*=k_{\max(x,y)}i_*f_*\in KIF\subseteq\mathcal{X}$, and $i_xk_yi_*f_*=i_xk_*i_*k_*f_*=i_xk_*f_*\in IKF\subseteq\mathcal{X}$.  The word $f_xk_yi_*f_*$ either reduces to a $\leq 3$-letter word or else lies in $(FKIF)_r$; in either case it lies in $\mathcal{X}$, establishing \textbf{(A)}.\\

\underline{The sets $FIK$ and $FKI$.}  Elements of $FIK$ have the form $f_yi_zk_*$, so $FIK=FIk_1$, and \textbf{(B)} $\#FIK\leq\#FI\leq n+\binom{n}{2}$.  For \textbf{(A)}, note that for any $x$, the word $k_xf_yi_zk_*=f_xf_yi_zk_*$ either reduces to a $\leq3$ letter word or else lies in $(KFIK)_r$, so it lies in $\mathcal{X}$, while $i_xFIK=\{0\}K=\{0\}\subseteq\mathcal{X}$ as well.  The arguments are similar for $FKI$.\\

\underline{The sets $(KFK)_r$ and $(KFI)_r$.}  Elements of $(KFK)_r$ have the form $k_yf_zk_w$.  To establish \textbf{(A)}, we note that for $1\leq x\leq n$, we have $k_xk_yf_zk_w=k_{\max(x,y)}f_zk_w$ by Lemma \ref{lemma_basics_unsaturated}, $i_xk_yf_zk_w=i_xk_*f_zk_w=i_xf_zk_w$ by Lemma \ref{lemma_banakh}, and $f_xk_yf_zk_w=k_{\max(x,y)}f_zk_w$ by Lemma \ref{lemma_fkfk}.  Then all three words admit representations of length $\leq 3$, and therefore lie in $\mathcal{X}$.

For \textbf{(B)}, since $k_yf_zk_w$ cannot be written with $\leq 2$ letters, by Lemma \ref{lemma_basics_unsaturated} it is necessary that $y>z$.  Also, by Lemma \ref{lemma_fk}, we may assume that $w\geq z$.  The number of triples $(y,z,w)$ with $y>z$ and $z\leq w$ may be found by the following reasoning: either $z=w$ or $z\neq w$.  If $z=w$, we find $\binom{n}{2}$ many triples $(y,z,z)$ with $y>z$.  If $z\neq w$, either $w=y$ or $w\neq y$.  If $w=y$ we again obtain $\binom{n}{2}$ many triples $(y,z,y)$.  If $w\neq y$, then there are $\binom{n}{3}$ many sets of distinct numbers $\{y,z,w\}$ where $z$ is minimal; these each yield two choices of ordered triples $(y,z,w)$ or $(w,z,y)$.  So the cardinality of $(KFK)_r$ is no more than $\binom{n}{2}+\binom{n}{2}+2\binom{n}{3}= 2\binom{n+1}{3}$.  The arguments are similar for $(KFI)_r$.\\

\underline{The set $(KFF)_r$.}  Elements of $(KFF)_r$ have the form $k_yf_zf_w$, which can be rewritten as $k_yf_zk_wf_w$; thus the arguments to establish \textbf{(A)} are exactly analogous to those given for the case of $(KFK)_r$.  For \textbf{(B)}, we note that since $k_yf_zf_w$ cannot be written as a word of length $\leq 2$, it must be the case that $k_yf_z\in(KF)_r$.  Therefore $\#(KFF)_r\leq \#(KF)_r\cdot\#F = \binom{n}{2}\cdot n$.\\

\underline{The set $(FKF)_r$.}  Elements of $(FKF)_r$ have the form $f_yk_zf_w$.  To establish \textbf{(A)}, note that for $1\leq x\leq n$, we have $k_xf_yk_zf_w=f_xf_yk_zf_w$ by Lemma \ref{lemma_ffk}, and this word either admits a word representation of length $\leq 3$ and therefore lies in $\mathcal{X}$, or else it lies in $(KFKF)_r\subseteq\mathcal{X}$.  Also $i_xf_yk_zf_w=0f_w=0\in\mathcal{X}$.

For \textbf{(B)}, since $f_yk_zf_w$ cannot be written with $\leq 2$ letters, by Lemma \ref{lemma_fkf} it is necessary that $z>y$ and $z>w$.  We have either $y=w$ or $y\neq w$.  If $y=w$ we are looking for triples of the form $(y,z,y)$ with $z>y$, of which there $\binom{n}{2}$ many.  If $y\neq w$, we find $\binom{n}{3}$ many sets $\{y,z,w\}$ of distinct numbers where $z$ is maximal; each of these yields two choices of ordered triples $(y,z,w)$ or $(w,z,y)$.  So the cardinality of $(FKF)_r$ is no more than $\binom{n}{2}+2\binom{n}{3}$.\\

At this point, we pause to observe the following: combining all the arguments in the previous parts, we have shown that if $o\in\mathcal{KF}_n^0$ admits any representation as a word of length $\leq 3$, then for every $1\leq x\leq n$, we have $k_xo, i_xo, f_xo\in\mathcal{X}$.  All words of length $\leq 4$ have this form, so put in other words, we have now shown:\\

\begin{itemize}
		\item All elements of $\mathcal{KF}_n^0$ which admit a word representation of length $\leq 4$ are already contained in a subset listed in the table. 
\end{itemize}
\vspace{.3cm}

\underline{The sets $(FIKI)_r$, $(FKIK)_r$, and $(FKIF)_r$.}  Elements of $(FIKI)_r$ have the form $f_yi_zk_*i_*$, where $y<z$ by Lemma \ref{lemma_fiki}.  There are $\binom{n}{2}$ many such pairs $(y,z)$, so \textbf{(B)} $\#(FIKI)_r\leq \binom{n}{2}$.  For \textbf{(A)}, note that for any $x$, the word $k_xf_yi_zk_*i_*=f_xf_yi_zk_*i_*$ either reduces to a $\leq4$ letter word or else lies in $(KFIKI)_r$, so it lies in $\mathcal{X}$; while $i_xFIKI=\{0\}KI=\{0\}\subseteq\mathcal{X}$ as well.  The arguments are similar for $(FKIK)_r$ and $(FKIF)_r$.\\

\underline{The sets $(KFIK)_r$, $(KFKI)_r$, and $(KFIF)_r$.}  All elements of $(KFIK)_r$ have the form $k_yf_zi_wk_*$ where $y>z$ and $z\leq w$, which implies $(KFIK)_r\subseteq (KFI)_rk_1$ and therefore $\#(KFIK)_r\leq\#(KFI)_r\leq2\binom{n+1}{3}$, establishing \textbf{(B)}.  For $1\leq x\leq n$, we have $k_xk_yf_zi_wk_*=k_{\max(x,y)}f_zi_wk_*$ by Lemma \ref{lemma_basics_unsaturated}, and $i_xk_yf_zi_wk_*=i_xk_*f_zi_wk_*=i_xf_zi_wk_*$ by Lemma \ref{lemma_banakh}, and $f_xk_yf_zi_wk_*=k_{\max(x,y)}f_zi_wk_*$ by Lemma \ref{lemma_fkfk}.  Each of these words has a representation of length $\leq 4$, and therefore lies in $\mathcal{X}$, establishing \textbf{(A)}.  The arguments are similar for $(KFKI)_r$ and $(KFIF)_r$.\\

\underline{The set $(KFKF)_r$.}  For $1\leq x\leq n$, we have $k_xKFKF\subseteq KFKF\subseteq\mathcal{X}$ by Lemma \ref{lemma_basics_unsaturated}, $i_xKFKF\subseteq IFKF\subseteq\mathcal{X}$ by Lemma \ref{lemma_banakh}, and $f_xKFKF\subseteq KFKF\subseteq\mathcal{X}$ by Lemma \ref{lemma_fkfk}, so \textbf{(A)} holds.

To establish \textbf{(B)}, we observe that every element of $(KFKF)_r$ has the form $k_xf_yk_zf_w$, and because this cannot be shortened to a word of length $\leq 3$, we must have $x>y$ by Lemma \ref{lemma_basics_unsaturated}, and $y<z$, $z>w$ by Lemma \ref{lemma_fkf}.  So we are looking for ordered quadruples $(x,y,z,w)$ which alternate in magnitude with $x>y$, $y<z$, $z>w$.  There are $\binom{n}{2}$ many such quadruples if $x=z$ and $y=w$; there are $2\binom{n}{3}$ many if $x=z$ but $y\neq w$; and there are $2\binom{n}{3}$ many if $y=w$ but $x\neq z$.  If $x=w$, then necessarily $y<x$ and $z>x$, which yields an additional $\binom{n}{3}$ possible quadruples.  If all of $x,y,z,w$ are distinct, then either $x$ or $z$ is maximal.  If $x$ is maximal then the choice of minimality for $y$ or $w$ determines the quadruple, yielding $2\binom{n}{4}$ quadruples.  If $z$ is maximal then either $y$ or $w$ is minimal; if $w$ is minimal the quadruple is determined, whereas if $y$ is minimal then there are $2$ ways to assign $x$ and $w$.  This gives another $(1+2)\binom{n}{4}$ quadruples where $z$ is maximal.  Thus we compute a bound of $\#(KFKF)_r\leq \binom{n}{2}+(2+2+1)\binom{n}{3}+(2+1+2)\binom{n}{4}$, as in the table.\\

At this point, our computations up to this point have shown:

\begin{itemize}
		\item All elements of $\mathcal{KF}_n^0$ which admit a word representation of length $\leq 5$ are already contained in a subset listed in the table. 
\end{itemize}
\vspace{.3cm}

\underline{The sets $(KFIKI)_r$, $(KFKIK)_r$, and $(KFKIF)_r$.}  Every element of $(KFIKI)_r$ has the form $k_yf_zi_wk_*i_*$, and we check that for any $1\leq x\leq n$, we have $k_xk_yf_zi_wk_*i_*=f_xk_yf_zi_wk_*i_*=k_{\max(x,y)}f_zi_wk_*i_*$ by Lemmas \ref{lemma_basics_unsaturated} and \ref{lemma_fkfk}, while $i_xk_yf_zi_wk_*i_*=i_xk_*f_zi_wk_*i_*=i_xf_zi_wk_*i_*$ by Lemma \ref{lemma_banakh}.  In all three cases we find representations of length $\leq 5$, so $k_x(KFIKI)_r$, $i_x(KFIKI)_r$, $f_x(KFIKI)_r\subseteq\mathcal{X}$ and we have proven \textbf{(A)}.

For \textbf{(B)}, we note that since $k_yf_zi_wk_*i_*$ does not reduce to a word of length $\leq4$, we must have $y>z$ by Lemma \ref{lemma_basics_unsaturated}, and by Lemma \ref{lemma_fiki} we have $w>z$.  Thus we are looking for triples $(y,z,w)$ with $y>z$ and $z<w$.  By arguments analogous to those in the case of $(FKF)_r$, we compute that $\#(KFIKI)_r\leq \binom{n}{2}+2\binom{n}{3}$.  The arguments for $(KFKIK)_r$ and $(KFKIF)_r$ are similar.\\

This completes the proof.
\end{proof}

\begin{example}[Separating KFKF Words]  In \cite{Banakh_2018a}, the authors show that $\#\mathcal{K}_n\leq 12n+2$ for a saturated $n$-topological space, so we expect the size of the Kuratowski monoid to grow linearly with $n$.  Our corresponding formula $p(n)$ in Theorem \ref{thm_main} implies quartic growth for the Kuratowski-Gaida-Eremenko monoid $\mathcal{KF}_n$.  As is evident from the proof, the sole reason for this is that the set of reduced words $(KFKF)_r=\{k_xf_yk_zf_w:x> y,y< z,z> w,1\leq x,y,z,w\leq n\}$ is expected to contain $\binom{n}{2}+5\binom{n}{3}+5\binom{n}{4}$ elements.

It is interesting to see a natural example of a saturated $4$-topological space in which the elements of $(KFKF)_r$ are distinct.  Consider $(\mathbb{R}^3,\tau_1,\tau_2,\tau_3,\tau_4)$, where $\tau_1=\tau_s\times\tau_s\times\tau_s$, $\tau_2=\tau_s\times\tau_s\times\tau_u$, $\tau_3=\tau_s\times\tau_u\times\tau_u$, and $\tau_4=\tau_u\times\tau_u\times\tau_u$.  Define $B=((1,2)\times(0,2)\times(0,2))\cup((0,2)\times(1,2)\times(0,2))\cup((0,2)\times(0,2)\times(1,2))$, and let $\{C_n:n\in\mathbb{N}\}$ be a countably infinite collection of pairwise disjoint $\tau_4$-closed sub-cubes of $(0,1)\times(0,1)\times(0,1)$ with the property that if $C=\bigcup_{n\in\mathbb{N}}C_n$, then the set of $\tau_4$-derived points of $C$ is exactly $C'=k_4C\backslash C=(\{1\}\times[0,1]\times[0,1])\cup([0,1]\times\{1\}\times[0,1])\cup([0,1]\times[0,1]\times\{1\})$.  We denote $B_{\mathbb{Q}}=B\cap(\mathbb{Q}\times\mathbb{Q}\times\mathbb{Q})$, and we take for our initial set $A=B_{\mathbb{Q}}\cup \left(\bigcup_{n\in\mathbb{N}}i_4C\right)$.

We also consider the particular open cube $i_4C_0\subseteq A$, say $i_4C_0=(x_0,x_1)\times(y_0,y_1)\times(z_0,z_1)$, and we label the following sets:

\begin{align*}
\phi &= (x_0,x_1)\times(y_0,y_1)\times\{z_1\} &= \text{the upper face of } C_0;\\
\psi &= (x_0,x_1)\times\{y_1\}\times(z_0,z_1) &= \text{the forward face of } C_0;\\
q &= (0,1)\times\{1\}\times(0,1) &= \text{the inner rear face of } B;\\
r &= \{1\}\times(0,1)\times(0,1) &= \text{the inner left face of } B;\\
Q &= (0,2)\times\{2\}\times(0,2) &= \text{the forward face of } B;\\
R &= \{2\}\times(0,2)\times(0,2) &= \text{the right face of } B;\\
U &= [((1,2)\times(0,2))\cup((0,2)\times(1,2))]\times\{0\} &= \text{the outer lower face of } B;\\
V &= ((1,2)\times\{0\}\times(0,2))\cup((0,2)\times\{0\}\times(1,2)) &= \text{the outer rear face of } B.
\end{align*}

\begin{figure}[b] \caption{From left to right: the set $C_0$ and its faces; the set $B$ and its faces; typical basic open neighborhoods in $\tau_1$, $\tau_2$, $\tau_3$.} \includegraphics[scale=0.28]{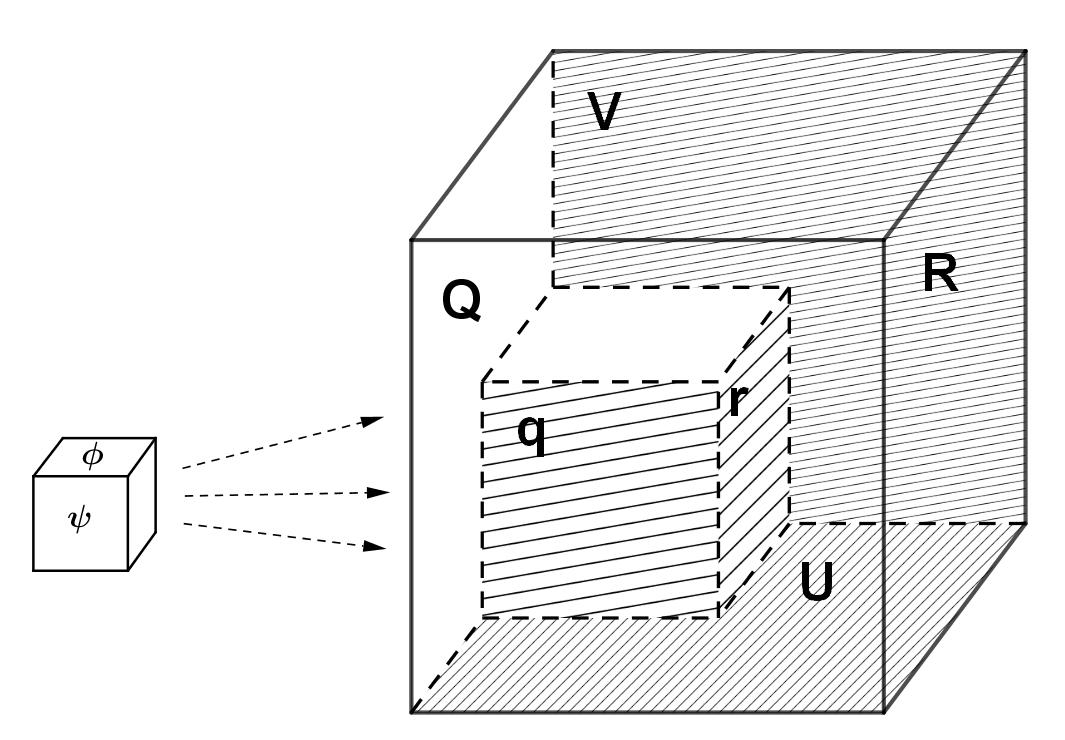} \includegraphics[scale=0.1]{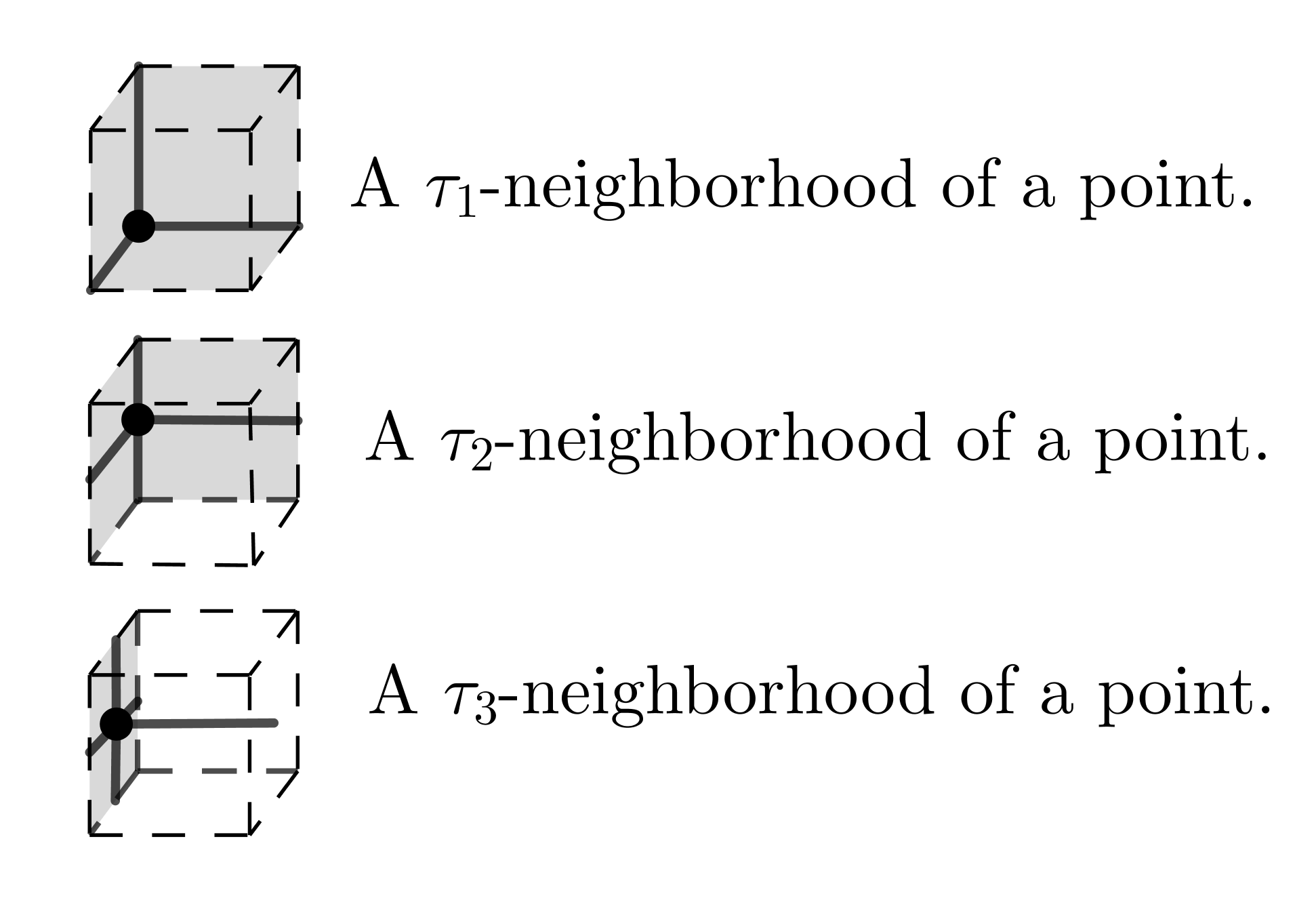}
\end{figure}

Then by direct computation, one may verify the following properties about the sets $k_xf_yk_zf_wA$, which differentiate all possible ordered quadruples $(x,y,z,w)$ satisfying $x>y,y<z,z>w$:

\begin{enumerate}
		\item \begin{enumerate}
				\item If $w=1$ then $\phi,\psi$ are disjoint from $k_xf_yk_zf_wA$.
				
				\item If $w=2$ then $\phi\subseteq k_xf_yk_zf_wA$ but $\psi\cap k_xf_yk_zf_wA=\emptyset$.
				
				\item If $w=3$ then $\phi,\psi\subseteq k_xf_yk_zf_wA$.
		\end{enumerate}
		
		\item \begin{enumerate}
				\item If $z=2$ then $Q,R$ are disjoint from $k_xf_yk_zf_wA$.
				
				\item If $z=3$ then $Q\subseteq k_xf_yk_zf_wA$ but $R\cap k_xf_yk_zf_wA=\emptyset$.
				
				\item If $z=4$ then $Q,R\subseteq k_xf_yk_zf_wA$.
		\end{enumerate}
		
		\item \begin{enumerate}
				\item If $y=1$ then $U,V$ are disjoint from $k_xf_yk_zf_wA$.
				
				\item If $y=2$ then $U\subseteq k_xf_yk_zf_wA$ but $V\cap k_xf_yk_zf_w=\emptyset$.
				
				\item If $y=3$ then $U,V\subseteq k_xf_yk_zf_wA$.
		\end{enumerate}
		
		\item \begin{enumerate}
				\item If $x=2$ then $q,r$ are disjoint from $k_xf_yk_zf_wA$.
				
				\item If $x=3$ then $q\subseteq k_xf_yk_zf_wA$ but $r\cap k_xf_yk_zf_wA=\emptyset$.
				
				\item If $x=4$ then $q,r\subseteq k_xf_yk_zf_wA$.
		\end{enumerate}
\end{enumerate}
\vspace{.3cm}

From the above, distinct quadruples $(x,y,z,w)$ yield distinct sets $k_xf_yk_zf_wA$, and therefore

\begin{center} $\#(KFKF)_r[A]=\#\{k_xf_yk_zf_wA: 1\leq x,y,z\leq n,x>y,y<z,z>w\}=\binom{4}{2}+5\binom{4}{3}+5\binom{4}{2}=31$.
\end{center}
\end{example}

\section{Separating Kuratowski-Gaida-Eremenko Words}

The goal of this section is to prove that our upper bound $p(n)$ is sharp for every $n$.  Guided by the results of the previous section, we introduce the following definition: a word in the generators $\{k_x,i_x,f_x:1\leq x\leq n\}$ (formally, an element of the free semigroup on $3n$ letters) will be called a \textbf{Kuratowski-Gaida-Eremenko word}, or KGE-word, if it has one of the following forms:\\

\begin{itemize}
		\item $\Id$ or $0=i_*f_*k_*$,
		\item $k_x$, $i_x$, $i_xk_*$, $k_xi_*$, $i_xk_*i_*$, $k_xi_*k_*$, $f_x$, $i_xf_*$, or $k_xi_*f_*$,
		\item $f_xf_y$,
		\item $k_xf_y$ where $x>y$, 
		\item $f_xi_yk_*i_*$, $f_xk_yi_*k_*$, or $f_xk_yi_*f_*$ where $x<y$,
		\item $f_xi_y$, $f_xk_y$, $f_xi_yf_*$, $f_xi_yk_*$, or $f_xk_yi_*$ where $x\leq y$,
		\item $k_xf_yf_z$ where $x>y$,
		\item $f_xk_yf_z$ where $x<y$ and $y<z$,
		\item $k_xf_yk_z$, $k_xf_yi_z$, $k_xf_yi_zk_*$, $k_xf_yk_zi_*$, or $k_xf_yi_zf_*$ where $x>y$ and $y\leq z$,
		\item $k_xf_yi_zk_*i_*$, $k_xf_yk_zi_*k_*$, or $k_xf_yk_zi_*f_*$ where $x>y$ and $y<z$,
		\item $k_xf_yk_zf_w$ where $x>y$, $y<z$, and $z>w$.
\end{itemize}
\vspace{.3cm}

We understand the $*$-notation as imposing an equivalence relation on the KGE-words: for example, although strictly speaking $i_1f_1k_1$ and $i_1f_1k_2$ are distinct words in the free semigroup, we regard them here as merely two representations of the same KGE-word $0$; on the other hand $f_1f_1$ and $f_1f_2$ are distinct KGE-words.    With this understanding in place, the number of KGE-words is $p(n)$.  For convenience, we allow words and operators to be used interchangeably when the precise meaning is clear.  So each KGE-word corresponds to at most one element of $\mathcal{KF}_n$, whereas \textit{a priori} an element of $\mathcal{KF}_n$ may be represented by more than one KGE-word. 

We note that in any monoid $\mathcal{KF}_n^0$, by Lemmas \ref{lemma_basics_unsaturated} through \ref{lemma_fkfk}, we have the following set inclusions:

\begin{center} $KF\supseteq (KF)_r\cup F$;\\ $KFK\supseteq (KFK)_r\cup FK$;\\ $KFI\supseteq (KFI)_r\cup FI$;\\ $KFIF\supseteq (KFIF)_r\cup FIF$;\\ $KFKIF\supseteq (KFKIF)_r\cup (FKIF)_r$;\\ $KFIK\supseteq (KFIK)_r\cup FIK$;\\ $KFKIK\supseteq (KFKIK)_r\cup (FKIK)_r$;\\ $KFKI\supseteq (KFKI)_r\cup FKI$;\\ $KFIKI\supseteq (KFIKI)_r\cup (FIKI)_r$;\\ $KFKF\supseteq (KFKF)_r\cup (KFF)_r\cup (FKF)_r\cup FF$.
\end{center}
\vspace{.3cm}

Therefore the following holds.

\begin{prop} \label{prop_cases}  Each KGE-word belongs to at least one of the following sets in $\mathcal{KF}_n^0$:\\

\begin{itemize}
		\item $\{\Id\}$ or $\{0\}$;
		\item $K$, $I$, $IK$, $KI$, $KIK$, $IKI$, $IF$, $KIF$;
		\item $KF$, $KFK$, $KFI$;
		\item $KFKF$; or
		\item $KFIF\cup KFKIF$, $KFIK\cup KFKIK$, $KFKI\cup KFIKI$.
\end{itemize}
\end{prop}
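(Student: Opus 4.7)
The plan is a direct case check across the eleven bullet-point forms in the definition of a KGE-word. The definition was crafted to line up term-by-term with the product-set word-types appearing in the main upper-bound theorem: every KGE-word is visibly a representative of some product set $E$ built from $K$, $I$, $F$, and each such $E$ has already been observed---in the list of inclusions displayed immediately before the proposition---to be contained in one of the sets appearing in the conclusion.

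First I would dispose of the trivial cases. The words $\Id$ and $0$ go to $\{\Id\}$ and $\{0\}$; the single-letter words $k_x, i_x$ lie in $K, I$; the standard Kuratowski combinations $i_xk_*, k_xi_*, i_xk_*i_*, k_xi_*k_*$ lie in $IK, KI, IKI, KIK$; the short frontier words $i_xf_*$ and $k_xi_*f_*$ lie in $IF$ and $KIF$; and the remaining singleton $f_x$ is absorbed into $KF$ via the inclusion $F \subseteq KF$.

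Next I would handle the words involving one or two frontier operators. The inclusions $FI \subseteq KFI$, $FK \subseteq KFK$, $FIF \subseteq KFIF$, $FIK \subseteq KFIK$, $FKI \subseteq KFKI$, $FIKI \subseteq KFIKI$, $FKIK \subseteq KFKIK$, $FKIF \subseteq KFKIF$ route each $f$-initial KGE-word to the appropriate set on the list. The three words $f_xf_y$, $k_xf_yf_z$, $f_xk_yf_z$ all land in $KFKF$ via $FF, KFF, FKF \subseteq KFKF$. Finally, the longer $k$-initial forms $k_xf_y$, $k_xf_yk_z$, $k_xf_yi_z$, $k_xf_yi_zk_*$, $k_xf_yk_zi_*$, $k_xf_yi_zf_*$, $k_xf_yi_zk_*i_*$, $k_xf_yk_zi_*k_*$, $k_xf_yk_zi_*f_*$, $k_xf_yk_zf_w$ belong by construction of the product set to $KF, KFK, KFI, KFIK, KFKI, KFIF, KFIKI, KFKIK, KFKIF, KFKF$ respectively.

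I do not expect any substantive obstacle, since the proposition is essentially a bookkeeping verification that the rather long definition of a KGE-word has been tuned to the word-type sets used in the main theorem. The only mild care needed concerns the $*$-notation: one must remember that, for instance, $i_xk_*$ is a single representative of the equivalence class $\{i_xk_y : 1 \leq y \leq n\} = IK$ rather than a new symbol. This convention is already in force by the remark immediately preceding the proposition, so the whole argument reduces to matching the bullet points in the KGE-word definition against the displayed list of set inclusions, one at a time.
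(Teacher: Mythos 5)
Your proposal is correct and follows essentially the same route as the paper: the paper simply displays the inclusions $F\subseteq KF$, $FI\subseteq KFI$, $(FKF)_r\subseteq KFKF$, etc.\ (all consequences of Lemmas 2.4--2.11, chiefly $k_xf_y=f_y$ for $x\leq y$) and then asserts the proposition, which is exactly the bookkeeping you carry out form by form. No gap.
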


For the reader's convenience, we note that the $17$ sets above correspond to the $17$ distinct even operators which comprise the monoid $\mathcal{KF}_1^0$ from \cite{Gaida_Eremenko_1974a}.\\

\begin{thm:thm2}  For every $n\geq 1$, there exists a saturated polytopological space $(X,\tau_1,...,\tau_n)$ in which $\#\mathcal{KF}_n^0=p(n)$ and $\#\mathcal{KF}_n=2p(n)$.  In fact, there is an initial set $A\subseteq X$ such that $\#\{oA:o\in\mathcal{KF}_n\}=2p(n)$.
\end{thm:thm2}

\begin{proof}  Applying Lemma \ref{lemma_disjoint_union}, it suffices to demonstrate the following:  For any pair of distinct KGE-words $\omega_1,\omega_2\in\mathcal{KF}_n$, there exists a saturated $n$-topological space $X^{\omega_1,\omega_2}$ and a subset $A^{\omega_1,\omega_2}\subseteq X^{\omega_1,\omega_2}$ in which $\omega_1A^{\omega_1,\omega_2}\neq \omega_2A^{\omega_1,\omega_2}$.  We verify the claim for $\omega_1\neq\omega_2$ by using the cases delineated in Proposition \ref{prop_cases}.\\

\noindent \underline{Case 1: $\omega_1\in E_1$ and $\omega_2\in E_2$, where $E_1$ and $E_2$ are distinct subsets from Proposition \ref{prop_cases}.}  Then we may take for our separating space $(\mathbb{R},\tau_1,...,\tau_n)$ where $\tau_1=...=\tau_n=\tau_u$, and take for our initial set $A$ the example exhibited by Gaida-Eremenko in \cite{Gaida_Eremenko_1974a}.  In this case, because all topologies are equal, the monoid $\mathcal{KF}_n^0$ is actually equal to $\mathcal{KF}_1^0$ and we get the following reductions: $KFIF\cup KFKIF=FIF$, $KFIK\cup KFKIK=FIK$, $KFKI\cup KFIKI=FKI$, $KFKF=FF$, $KF=F$, $KFK=FK$, $KFI=FI$.  But elements $\omega_1,\omega_2$ taken from distinct word types will produce different sets $\omega_1A\neq\omega_2A$, as demonstrated by Gaida and Eremenko.\\

\noindent \underline{Case 2: $\omega_1,\omega_2\in E$ where $E=K$, $I$, $IK$, $KI$, $KIK$, $IKI$, $IF$, or $KIF$.}  We assume, for example, that $\omega_1,\omega_2\in KIK$.  We have $\omega_1=k_{x_1}i_*k_*$ and $\omega_2=k_{x_2} i_*k_*$ where $1\leq x_1,x_2\leq n$, and since $\omega_1\neq\omega_2$, we have $x_1\neq x_2$.  Assume without loss of generality that $x_1<x_2$, and take for a separating space $(\mathbb{R},\tau_1,...,\tau_n)$ where $\tau_1=...=\tau_{x_1}=\tau_s$ and $\tau_{x_1+1}=...=\tau_n=\tau_u$.  Take the initial set $A$ from Example \ref{ex_usual_sorg}.  Then $\omega_1A=k_1i_*k_*A\neq k_ni_*k_*A=\omega_2A$.  The proofs for the other sets $E=K,I,...$ etc. are similar because words in these sets $E$ depend on only one index, and we leave them to the reader.\\

\noindent \underline{Case 3: $\omega_1,\omega_2\in KF$.}  If $\omega_1,\omega_2\in KF$, then we have $\omega_1=k_{x_1}f_{y_1}$ and $\omega_2=k_{x_2}f_{y_2}$ where $1\leq x_1,y_1,x_2,y_2\leq n$, $x_1\geq y_1$, and $x_2\geq y_2$.  Assuming $(x_1,y_1)\neq(x_2,y_2)$, we have either $x_1\neq x_2$ or $y_1\neq y_2$.

\indent\indent \textit{Sub-Case (a):}  Suppose $y_1\neq y_2$; without loss of generality assume $y_1<y_2$.  Then take for a separating space $(\mathbb{R},\tau_1,..,\tau_n)$ where $\tau_1=...=\tau_{y_1}=\tau_s$ and $\tau_{y_1+1}=...=\tau_n=\tau_u$, and take for an initial set $A$ as in Example \ref{ex_usual_sorg}.  Then we have $\omega_1=k_{x_1}f_{y_1}=k_{x_1}f_1$, which is equal to either $f_1$ or $k_nf_1$ depending on the value of $x_1$.  On the other hand since $x_2\geq y_2>y_1$, we have $\omega_2=k_{x_2}f_{y_2}=k_nf_n=f_n$.  Since $f_1A\neq k_nf_1A\neq f_nA$, we conclude $\omega_1A\neq\omega_2A$ as desired.

\indent\indent \textit{Sub-Case (b):}  Suppose $y_1=y_2$ but $x_1\neq x_2$; without loss of generality assume $x_1<x_2$.  Take for a separating space $(\mathbb{R},\tau_1,..,\tau_n)$ where $\tau_1=...=\tau_{x_1}=\tau_s$ and $\tau_{x_1+1}=...=\tau_n=\tau_u$, and take the usual initial set $A$ as in Example \ref{ex_usual_sorg}.  Then since $y_1\leq x_1$, we have $\omega_1=k_{x_1}f_{y_1}=k_1f_1=f_1$, whereas $\omega_2=k_{x_2}f_{y_2}=k_nf_{y_2}\in\{f_n,k_nf_1\}$.  So $\omega_1A\neq\omega_2A$ as in Example \ref{ex_usual_sorg}.\\

\noindent \underline{Case 4: $\omega_1,\omega_2\in KFK$.}  The idea of this proof is the same as in Case 3.  If $\omega_1,\omega_2\in KFK$, then we have $\omega_1=k_{x_1}f_{y_1}k_{z_1}$ and $\omega_2=k_{x_2}f_{y_2}k_{z_2}$ where $1\leq x_1,y_1,z_1,x_2,y_2,z_2\leq n$, $x_1\geq y_1$, $y_1\leq z_1$, $x_2\geq y_2$, $y_2\leq z_2$.  We have $(x_1,y_1,z_1)\neq(x_2,y_2,z_2)$, and therefore $x_1\neq x_2$, $y_1\neq y_2$ or $z_1\neq z_2$.

\indent\indent \textit{Sub-Case (a):}  Suppose $z_1\neq z_2$, so without loss of generality $z_1<z_2$.  Take for a separating space $(\mathbb{R},\tau_1,..,\tau_n)$ where $\tau_1=...=\tau_{z_1}=\tau_s$ and $\tau_{z_1+1}=...=\tau_n=\tau_u$, and take for an initial set $A$ as in Example \ref{ex_usual_sorg}.  Then since $y_1\leq z_1$, we have $\omega_1=k_{x_1}f_{y_1}k_{z_1}=k_{x_1}f_1k_1$, which is equal to either $f_1k_1$ or $k_nf_1k_1$ depending on the value of $x_1$.  On the other hand $\omega_2=k_{x_2}f_{y_2}k_n$, so $\omega_2$ is equal to either $k_1f_1k_n=f_1k_n$, $k_nf_1k_n$, or $k_nf_nk_n=f_nk_n$, depending on the values of $x_2,y_2$.  These five distinct possibilities yield five distinct sets when applied to $A$, so we conclude $\omega_1A\neq\omega_2A$ as desired.

\indent\indent \textit{Sub-Case (b):}  Suppose $z_1=z_2$ but $y_1< y_2$.  Take for a separating space $(\mathbb{R},\tau_1,..,\tau_n)$ where $\tau_1=...=\tau_{y_1}=\tau_s$ and $\tau_{y_1+1}=...=\tau_n=\tau_u$, and take the usual initial set $A$ as in Example \ref{ex_usual_sorg}.  Then, considering all possible values of $x_1,z_1$, we compute that $\omega_1=k_{x_1}f_1k_{z_1} \in \{f_1k_1,f_1k_n,k_nf_1k_1,k_nf_1k_n\}$.  On the other hand since $x_2,z_2\geq y_2>y_1$, we have $\omega_2=k_nf_nk_n=f_nk_n$.  So $\omega_1A\neq\omega_2A$.

\indent\indent \textit{Sub-Case (c):}  Suppose $z_1=z_2$ and $y_1=y_2$ but $x_1<x_2$.  Take for a separating space $(\mathbb{R},\tau_1,..,\tau_n)$ where $\tau_1=...=\tau_{x_1}=\tau_s$ and $\tau_{x_1+1}=...=\tau_n=\tau_u$, and take the usual initial set $A$ as in Example \ref{ex_usual_sorg}.  We compute $\omega_1=k_{x_1}f_{y_1}k_{z_1}=k_1f_1k_{z_1}\in\{f_1k_1,f_1k_n\}$, and $\omega_2=k_{x_2}f_{y_2}k_{z_2}=k_nf_{y_2}k_{z_2}\in\{k_nf_1k_1,k_nf_1k_n,f_nk_n\}$, so $\omega_1A\neq\omega_2A$.\\

\noindent \underline{Case 5: $\omega_1,\omega_2\in KFI$.}  In this case take the same separating space as in Case 4, but for an initial set take $cA$ where $A$ is the initial set from Case 4.  We are done if $\omega_1cA\neq\omega_2cA$, and this follows from Case 4 because both $\omega_1c$ and $\omega_2c$ are elements of $KFK$.  (To verify this, write $\omega_1=k_{x_1}f_{y_1}i_{z_1}$ where $1\leq x_1,y_1,z_1\leq n$, $x_1\geq y_1$, and $y_1\leq z_1$.  Then $\omega_1c=k_{x_1}f_{y_1}ck_{z_1}=k_{x_1}f_{y_1}k_{z_1}\in KFK$, and similarly for $\omega_2$.)\\

\noindent \underline{Case 6: $\omega_1,\omega_2\in KFKF$.}  We proceed similarly to Cases 3 and 4.  We have $\omega_1=k_{x_1}f_{y_1}k_{z_1}f_{w_1}$ and $\omega_2=k_{x_2}f_{y_2}k_{z_2}f_{w_2}$ where $1\leq x_1,y_1,z_1,w_1,x_2,y_2,z_2,w_2\leq n$, $x_1\geq y_1$, $y_1\leq z_1$, $z_1\geq w_1$, $x_2\geq y_2$, $y_2\leq z_2$, and $z_2\geq w_2$.  We also know $(x_1,y_1,z_1,w_1)\neq(x_2,y_2,z_2,w_2)$, which gives us four sub-cases.

\indent\indent \textit{Sub-Case (a):}  Suppose $w_1\neq w_2$, so without loss of generality $w_1<w_2$.  We consider $(\mathbb{R},\tau_1,...,\tau_n)$ with $\tau_1=...=\tau_{w_1}=\tau_s$ and $\tau_{w_1+1}=...=\tau_n=\tau_u$.  Considering all possible values of $x_1,y_1,z_1,x_2,y_2,z_2$, we compute that

\begin{align*}
\omega_1 = k_{x_1}f_{y_1}k_{z_1}f_1 &\in \{f_1f_1, f_nf_1, f_1k_nf_1, k_nf_1f_1, k_nf_1k_nf_1\}\\
\omega_2 = k_{x_2}f_{y_2}k_{z_2}f_n &\in \{f_1f_n, f_nf_n, k_nf_1f_n\}
\end{align*}
\vspace{.3cm}

\noindent from which we conclude $\omega_1A\neq\omega_2A$, where $A$ is the initial set from Example \ref{ex_usual_sorg}.

\indent\indent \textit{Sub-Case (b):}  Suppose $w_1=w_2$ but $z_1<z_2$, and consider $(\mathbb{R},\tau_1,...,\tau_n)$ where $\tau_1=...=\tau_{z_1}=\tau_s$ and $\tau_{z_1+1}=...=\tau_n=\tau_u$.  Since $w_1, y_1\leq z_1$, we get $\omega_1=k_{x_1}f_1k_1f_1\in\{f_1f_1,k_nf_1f_1\}$ whereas $\omega_2=k_{x_2}f_{y_2}k_nf_1\in\{f_1k_nf_1,k_nf_1k_nf_1,f_nf_1\}$, so $\omega_1A\neq\omega_2A$ where $A$ is as in Example \ref{ex_usual_sorg}.

\indent\indent \textit{Sub-Case (c):}  Suppose $w_1=w_2$, $z_1=z_2$ but $y_1<y_2$, and consider $(\mathbb{R},\tau_1,...,\tau_n)$ where $\tau_1=...=\tau_{y_1}=\tau_s$ and $\tau_{y_1+1}=...=\tau_n=\tau_u$.  Since $z_1=z_2\geq y_2$, we get $\omega_1=k_{x_1}f_1k_nf_{w_1}\in\{f_1k_nf_1, k_nf_1k_nf_1, f_1f_n, k_nf_1f_n\}$, whereas since $x_2,z_2\geq y_2$, we have $\omega_2=k_nf_nk_nf_{w_2}\in\{f_nf_1,f_nf_n\}$, so $\omega_1A\neq\omega_2A$ where $A$ is as in Example \ref{ex_usual_sorg}.

\indent\indent \textit{Sub-Case (d):}  Suppose $w_1=w_2$, $z_1=z_2$, $y_1=y_2$ but $x_1<x_2$, and consider $(\mathbb{R},\tau_1,...,\tau_n)$ where $\tau_1=...=\tau_{x_1}=\tau_s$ and $\tau_{x_1+1}=...=\tau_n=\tau_u$.  Since $y_1\leq x_1$, we get $\omega_1=k_1f_1k_{z_1}f_{w_1}\in\{f_1f_1,f_1f_n,f_1k_nf_1\}$ whereas $\omega_2=k_nf_{y_2}k_{z_2}f_{w_2}\in\{k_nf_1f_1, k_nf_1f_n, k_nf_1k_nf_1, f_nf_1\}$, so $\omega_1A\neq\omega_2A$ where $A$ is as in Example \ref{ex_usual_sorg}.
\end{proof}

\noindent \underline{Case 7: $\omega_1,\omega_2\in KFIF\cup KFKIF$.}  We proceed similarly to Cases 3, 4, and 6.  Observe that we may write

\begin{center} $\omega_1=k_{x_1}f_{y_1}\sigma_{z_1}i_1f_*$
\end{center}
\vspace{.3cm}

\noindent with $x_1\geq y_1$, $y_1\leq z_1$, where either $\sigma_{z_1}=i_{z_1}\in I$ (in case $\omega_1\in KFIF)$ or $\sigma_{z_1}=k_{z_1}\in K$ where $z_1>y_1$ (in case $\omega_1\in KFKIF\backslash KFIF$).  Similarly, we may write $\omega_2$ as

\begin{center} $\omega_2=k_{x_2}f_{y_2}\rho_{z_2}i_1f_*$
\end{center}
\vspace{.3cm}

\noindent where $x_2\geq y_2$, $y_2\leq z_2$, and either $\rho_{z_2}=i_{z_2}$ or else $\rho_{z_2}=k_{z_2}$ and $z_2>y_2$.  Since $\omega_1\neq\omega_2$, there are four sub-cases: either $z_1\neq z_2$; or $z_1=z_2$ but $\sigma_{z_1}\neq \rho_{z_2}$; or $y_1\neq y_2$; or $x_1\neq x_2$.  In each of the four sub-cases below, we denote $\sigma_1=i_1$ and $\sigma_n=i_n$ if $\sigma_{z_1}=i_{z_1}$; and $\sigma_n=k_n$ and $\sigma_n=k_n$ if $\sigma_{z_1}=k_{z_1}$.  Similarly we allow $\rho_1,\rho_n$ to denote either $i_1,i_n$ or $k_1,k_n$ respectively as implied by the value of $\rho_{z_2}$.

\indent\indent \textit{Sub-Case (a):}  Suppose $z_1<z_2$.  Consider the separating space $(\mathbb{R},\tau_1,...,\tau_n)$ where $\tau_1=...=\tau_{z_1}=\tau_s$ and $\tau_{z_1+1}=...=\tau_n=\tau_u$.  Since $y_1\leq z_1$, we have we have $\omega_1=k_{x_1}f_1\sigma_1i_1f_*=k_{x_1}f_1i_1f_*$, so $\omega_1=f_1i_1f_*$ or $\omega_1=k_nf_1i_1f_*$, depending on the value of $x_1$.  On the other hand, considering all possible values of $x_2$, $y_2$, and $\rho_{z_2}=\rho_n$, we compute 

\begin{center} $\omega_2=k_{x_2}f_{y_2}\rho_ni_1f_*\in\{f_1i_nf_*,f_1k_ni_*f_*,f_ni_nf_*,k_nf_1k_ni_*f_*,k_nf_1i_nf_*\}$.
\end{center}
\vspace{.3cm}

\noindent It follows that $\omega_1A\neq\omega_2A$, where $A$ is the initial set from Example \ref{ex_usual_sorg}.

\indent\indent \textit{Sub-Case (b):}  Suppose $z_1=z_2$, but $\sigma_{z_1}=k_{z_1}$ with $z_1>y_1$, while $\rho_{z_2}=i_{z_2}$.  We take the separating space $(\mathbb{R},\tau_1,...,\tau_n)$ where $\tau_1=...=\tau_{y_1}=\tau_s$ and $\tau_{y_1+1}=...=\tau_n=\tau_u$.  We have $\omega_1=k_{x_1}f_1k_ni_1f_*\in\{f_1k_ni_*f_*,k_nf_1k_ni_*f_*\}$, while since $z_2=z_1>y_1$, we have $\omega_2=k_{x_2}f_{y_2}i_ni_1f_*=k_{x_2}f_{y_2}i_nf_*\in\{f_1i_nf_*, f_ni_nf_*,k_nf_1i_nf_*\}$.  So $\omega_1A\neq\omega_2A$, taking $A$ from Example \ref{ex_usual_sorg}.

\indent\indent \textit{Sub-Case (c):}  Suppose $y_1<y_2$, and take the separating space $(\mathbb{R},\tau_1,...,\tau_n)$ where $\tau_1=...=\tau_{y_1}=\tau_s$ and $\tau_{y_1+1}=...=\tau_n=\tau_u$.  We have

\begin{center} $\omega_1=k_{x_1}f_1\sigma_{z_1}i_1f_*\in\{f_1i_1f_*,f_1i_nf_*,f_1k_ni_*f_*,k_nf_1i_1f_*,k_nf_1i_nf_*,k_nf_1k_ni_*f_*\}$,
\end{center}
\vspace{.3cm}

\noindent whereas since $z_2\geq y_2$, we have $\omega_2=k_{x_2}f_n\rho_ni_1f_*=k_{x_2}f_ni_nf_*=f_ni_nf_*$.  So $\omega_1A\neq\omega_2A$, taking $A$ from Example \ref{ex_usual_sorg}.

\indent\indent \textit{Sub-Case (d):}  Suppose $y_1=y_2$, but $x_1<x_2$, and take the separating space $(\mathbb{R},\tau_1,...,\tau_n)$ where $\tau_1=...=\tau_{x_1}=\tau_s$ and $\tau_{x_1+1}=...=\tau_n=\tau_u$ with the initial set $A$ from Example \ref{ex_usual_sorg}.  Since $y_1=y_2\leq x_1$, we have $\omega_1=k_1f_1\sigma_{z_1}i_1f_*\in\{f_1i_1f_*,f_1i_nf_*,f_1k_ni_*f_*\}$ and $\omega_2=k_nf_1\rho_{z_2}i_1f_*\in\{k_nf_1i_1f_*, k_nf_1i_nf_*, k_nf_1k_ni_*f_*\}$, so $\omega_1A\neq\omega_2A$.\\

\noindent \underline{Case 8: $\omega_1,\omega_2\in KFIK\cup KFKIK$.}  In this case take the same separating space as in Case 7, but for an initial set take $f_nA$ where $A$ is the initial set from Case 7.  We are done if $\omega_1f_nA\neq\omega_2f_nA$; but this follows from Case 7 because $\omega_1f_n,\omega_2f_n\in KFIF\cup KFKIF$.\\

\noindent \underline{Case 9: $\omega_1,\omega_2\in KFKI\cup KFIKI$.}  Take the same separating space as in Cases 7 and 8, and for an initial set take $cA$ where $A$ is the initial set from Case 8.  Then since $\omega_1c,\omega_2c\in KFIK\cup KFKIK$, we have $\omega_1cA\neq\omega_2cA$ by Case 8. 

\section*{Acknowledgement}

We extend our sincere thanks to Mark Bowron for his comments and corrections.

\end{document}